\theoremstyle{plain} 
\newtheorem{thm}{Theorem}
\newtheorem{cor}{Corollary}
\newtheorem{prop}{Proposition}
\theoremstyle{definition}
\theoremstyle{remark}
\newcommand{\RR}{\mathbb{R}}
\newcommand{\E}{\mathsf{E}}
\newcommand{\prob}{\mathsf{P}}
\newcommand{\eps}{\varepsilon}
\renewcommand{\phi}{\varphi}
\newcommand{\rank}{\mathrm{rank}}
\newcommand{\nm}{\mathsf{N}}
\newcommand{\chisq}{\mathsf{ChiSq}}
\newcommand{\lkappa}{\underline{\kappa}}
\newcommand{\ukappa}{\overline{\kappa}}
\renewcommand{\S}{\mathcal{S}}
\title{Empirical priors for prediction in sparse high-dimensional linear regression\footnote{Running title: {\em Empirical priors for prediction}}}
\author{
Ryan Martin\footnote{Department of Statistics, North Carolina State University, 2311 Stinson Dr., Raleigh, NC 27695. Email: {\tt rgmarti3@ncsu.edu}, {\tt ytang22@ncsu.edu}} \; and \; Yiqi Tang$^\dagger$
}
\date{\today}
\begin{document}

\maketitle 


\begin{abstract}
In this paper we adopt the familiar sparse, high-dimensional linear regression model and focus on the important but often overlooked task of prediction.  In particular, we consider a new empirical Bayes framework that incorporates data in the prior in two ways: one is to center the prior for the non-zero regression coefficients and the other is to provide some additional regularization.  We show that, in certain settings, the asymptotic concentration of the proposed empirical Bayes posterior predictive distribution is very fast, and we establish a Bernstein--von Mises theorem which ensures that the derived empirical Bayes prediction intervals achieve the targeted frequentist coverage probability.  The empirical prior has a convenient conjugate form, so posterior computations are relatively simple and fast.  Finally, our numerical results demonstrate the proposed method's strong finite-sample performance in terms of prediction accuracy, uncertainty quantification, and computation time compared to existing Bayesian methods.  

\smallskip

\emph{Keywords and phrases:} Bayesian inference; data-dependent prior; model averaging; predictive distribution; uncertainty quantification.
\end{abstract}

\section{Introduction}
\label{S:intro}

Consider a linear regression model 
\begin{equation}
\label{eq:reg}
y = X \beta + \sigma z, 
\end{equation}
where $y$ is a $n \times 1$ vector of response variables, $X$ is a $n \times p$ matrix of explanatory variables, $\beta$ is a $p \times 1$ vector of regression parameters, $\sigma > 0$ is an unknown scale parameter, and $z$ is a $n \times 1$ vector of independent standard normal errors.  Here, our interest is in the {high-dimensional} setting where $p \gg n$, and our particular aim is to predict the value of a new response $\tilde y \in \RR^d$ at a given $\tilde X \in \RR^{d \times p}$, $d \geq 1$, an important and challenging problem in these high-dimensional scenarios.  

An initial obstacle to achieving this aim is that the model above cannot be fit without some additional structure.  As is common in the literature, we will assume a {sparsity} structure on the high-dimensional $\beta$ vector.  That is, we will assume that most of the entries in $\beta$ are zero; this will be made more precise in the following sections.  With this assumed structure, a plethora of methods are now available for estimating a sparse $\beta$, e.g., lasso \citep{tibshirani1996}, adaptive lasso \citep{zou2006}, SCAD \citep{fanli2001}, and others; moreover, software is available to carry out the relevant computations easily and efficiently.  Given an estimator of $\beta$, it is conceptually straightforward to produce a point prediction of a new response.  However, the regularization techniques employed by these methods cause the estimators to have non-regular distribution theory \citep[e.g.,][]{potscher.leeb.2009}, so results on uncertainty quantification, i.e., coverage properties of prediction intervals, are few in number; but see \citet{leeb2006, leeb2009} and the references therein.  

On the Bayesian side, given a full probability model, it is conceptually straightforward to obtain a {predictive distribution} for the new response and suggest some form of uncertainty quantification, but there are still a number of challenges.  First, in high-dimensional cases such as this, the choice of prior matters, so specifying prior distributions that lead to desirable operating characteristics of the posterior distribution, e.g., optimal posterior concentration rates, is non-trivial.  \citet{castillo.schmidt.vaart.reg} and others have demonstrated that in order to achieve the optimal concentration rates, the prior for the non-zero $\beta$ coefficients must have sufficiently heavy tails, in particular, heavier than the conjugate Gaussian tails.  This constraint leads to the second challenge, namely, computation of the posterior distribution.  While general Markov chain Monte Carlo (MCMC) methods are available, the individual steps can be expensive and the chain can be slow to converge.  Some believe these computations to be prohibitively slow for priors that include a discrete component for the zero coefficients, so they prefer continuous shrinkage priors like the horseshoe \citep{carvalho.polson.scott.2010} and Dirichlet--Laplace \citep{dunson.shrinkage}.  In any case, even if the first two challenges can be overcome and a predictive distribution for $\tilde y$ can be obtained, it is not automatic that the prediction intervals from this distribution provide valid uncertainty quantification, i.e., that the posterior 95\% predictive interval will contain the to-be-observed value of $\tilde y$ with probability 0.95.  

The computational difficulties mentioned above stem from the need to work with the heavy-tailed priors that yield desired posterior concentration properties.  Inspired by the insight that prior tails would be irrelevant if the prior center was appropriately chosen, \citet{martin.mess.walker.eb} developed a new empirical Bayes approach for high-dimensional regression that incorporates data to both center the prior and to provide some extra regularization.  Their approach is powerful because it allows for conjugate priors to be used, which drastically speeds up computation, but without sacrificing on the desirable concentration rate properties enjoyed by the fully Bayesian approach with heavy-tailed priors.  See, also, \citet{martin.walker.deb}.  

Our goal in the present paper is to investigate the performance of the empirical Bayes approach in \citet{martin.mess.walker.eb} in the context of predicting a new response.  First, we review their empirical Bayes formulation in Section~\ref{S:review}, adding some details about the unknown-$\sigma$ case.  Next, in Section~\ref{S:predictive}, we turn to the prediction task and show that, thanks to the empirical prior's conjugacy, the corresponding empirical Bayes predictive distribution has a relatively simple form and can be easily and efficiently sampled via standard Monte Carlo techniques.  Theoretical properties are investigated in Section~\ref{SS:theory} and, in particular, we show that our empirical Bayes predictive distribution has fast convergence rates, nearly parametric in some cases, both for in- and out-of-sample prediction settings.  Moreover, under reasonable conditions, we establish a Bernstein--von Mises theorem, which implies that the derived posterior prediction intervals have the target coverage probability asymptotically.  In Section~\ref{S:examples} we demonstrate, in both real- and simulated-data examples, that the proposed empirical Bayes framework provides accurate point prediction, valid prediction uncertainty quantification, and fast computation across various settings compared to a number of existing methods.  Finally, some concluding remarks are given in Section~\ref{S:discuss}.  We have also  developed an R package, {\tt ebreg} \citep{ebreg.R}, that provides users with tools for estimation and variable selection, as described in \citet{martin.mess.walker.eb}, and the tools for prediction as presented here.

\section{Empirical prior for regression}
\label{S:review}

\subsection{Known $\sigma^2$}
\label{SS:known}

Here we review the empirical prior approach for sparse, high-dimensional regression laid out in \citet{martin.mess.walker.eb}.  Like \citet{castillo.schmidt.vaart.reg} and others, they focus on the known-$\sigma^2$ case, so we present their formulation here.  Adjustments to handle the more realistic unknown-$\sigma^2$ case are described in Section~\ref{SS:unknown}.  

Under the sparsity assumption, it is natural to decompose the high-dimensional vector $\beta$ as $(S, \beta_S)$, where $S \subseteq \{1,2,\ldots,p\}$ is the {configuration} of $\beta$, i.e., the set of indices corresponding to non-zero/active coefficients, and $\beta_S$ is the $|S|$-vector of non-zero values; here $|S|$ denotes the cardinality of the finite set $S$.  This decomposition suggests a hierarchical model with a marginal prior for $S$ and a conditional prior for $\beta_S$, given $S$. 

For the marginal prior for $S$, we take the mass function 
\begin{equation}
\label{eq:S.prior}
\pi(S) = \textstyle \binom{p}{|S|}^{-1} q_n(|S|), \quad S \subset \{1,2,\ldots,p\}, \quad |S| \leq R, 
\end{equation}
where $q_n$ is a mass function on $\{0,1,\ldots,R\}$, which we take to be 
\begin{equation}
\label{eq:complexity}
q_n(s) \propto (cp^a)^{-s}, \quad s=0,1,\ldots,R,
\end{equation}
with $R=\rank(X)$ and $(a,c)$ some hyperparameters to be specified; see Section~\ref{S:examples}.  This corresponds to a truncated geometric prior on the configuration size and a uniform prior on all configurations of the given size; see, also, \citet{castillo.schmidt.vaart.reg}.  

There is an assumption hidden in the definition \eqref{eq:S.prior} that deserves comment.  The prior does not support all possible configurations, only those of size no more than $R \leq n \ll p$.  The rationale for this restriction is that the true value of $\beta$, namely $\beta^\star$, is assumed to be sparse, i.e., the true configuration, $S^\star = S_{\beta^\star}$, is of size much smaller than $n$, so there is no reason not to incorporate an assumption like ``$|S^\star| \leq n$'' into the prior.  Since $R \leq n$, the assumption ``$|S^\star| \leq R$'' encoded in \eqref{eq:S.prior} is generally stronger.  However, $R=n$ is typical, e.g., if the rows of $X$ are iid $p$-vectors with positive definite covariance matrix, in which case ``$|S^\star| \leq R$'' and ``$|S^\star| \leq n$'' are equivalent.  Moreover, nothing significant changes about the theory if $|S^\star| \leq R < n$; see, e.g., \citet[][Theorem~3]{abramovich.grinshtein.2010}.  But if it happens that $R < |S^\star| < n$, then the compatibility condition described in \citet{castillo.schmidt.vaart.reg} fails, so even an oracle who knows which variables are important would be unable to give a fully satisfactory solution to the problem.  Since one inevitably must assume that $|S^\star| \leq R$ to establish any good properties, we opt to include such an assumption in the prior construction, via \eqref{eq:S.prior} and \eqref{eq:complexity}.  

The empirical or data-dependent element comes in the conditional prior for $\beta_S$, given $S$.  That is, set 
\begin{equation}
\label{eq:beta.prior.S}
\beta_S \mid S \sim \nm_{|S|}\bigl( \hat\beta_S, \sigma^2 \gamma^{-1} (X_S^\top X_S)^{-1}\bigr), 
\end{equation}
where $X_S$ is the $n \times |S|$ submatrix of $X$ with only the columns corresponding to the configuration $S$, $\hat\beta_S$ is the least squares estimate based on design matrix $X_S$, and $\gamma > 0$ is a precision parameter to be specified.  Except for being centered on the least squares estimator, this closely resembles Zellner's $g$-prior \citep[e.g.,][]{zellner1986}.  Again, the idea behind a data-dependent centering is to remove the influence of the prior tails on the posterior concentration, which requires use of the data.  See \citet{martin.walker.eb, martin.walker.deb} and \citet{martin.mess.walker.eb} for more on this point. 

Putting the two pieces together, we have the following empirical prior for $\beta$: 
\begin{equation}
\label{eq:beta.prior}
\Pi_n(d\beta) = \sum_S \pi(S) \nm_{|S|}\bigl( d\beta_S \mid \hat\beta_S, \sigma^2 \gamma^{-1} (X_S^\top X_S)^{-1} \bigr) \otimes \delta_{0_{S^c}}(d\beta_{S^c}), 
\end{equation}
where $\delta_{0_{S^c}}$ denotes a Dirac point-mass distribution at the origin in the $|S^c|$-dimensional space, and $\mu \otimes \nu$ is the product of two measures $\mu$ and $\nu$.  This is a {spike-and-slab prior} where the spikes are point masses and the slabs are conjugate normal densities, which have nice computational properties but are centered at a convenient estimator to eliminate the thin-tail effect on the posterior concentration rate.  

Next we combine this prior with the likelihood in almost the usual way.  That is, for a constant $\alpha \in (0,1)$, define the corresponding empirical Bayes posterior $\Pi^n$ for $\beta$ as 
\begin{equation}
\label{eq:update}
\Pi^n(d\beta) \propto L_n(\beta)^\alpha \, \Pi_n(d\beta), 
\end{equation}
where 
\begin{equation}
\label{eq:likelihood}
L_n(\beta) = \nm_n(y \mid X\beta, \sigma^2 I) \propto \exp\{-\tfrac{1}{2\sigma^2} \|y - X\beta\|^2\}, 
\end{equation}
is the likelihood, with $\|\cdot\|$ the $\ell_2$-norm on $\RR^n$.  The power $\alpha$ is unusual, but the role it plays is to flatten out the posterior, effectively discounting the data slightly.  \citet{martin.mess.walker.eb} describe this as a regularization that prevents the posterior from chasing the data too closely, and similar discounting ideas have been used for robustness purposes in certain misspecified models \citep[e.g.,][]{grunwald.ommen.scaling, syring.martin.scaling, holmes.walker.scaling}.  In our present context, the $\alpha$ discount is a technical device to help the posterior adapt to the unknown sparsity \citep[see][]{martin.walker.deb}.
For those readers uncomfortable with the power likelihood in \eqref{eq:update}, an equivalent representation is as a genuine Bayesian update 
\[ \Pi^n(d\beta) \propto L_n(\beta) \, \Pi_n^{\text{reg}}(d\beta), \]
where 
\[ \Pi_n^{\text{reg}}(d\beta) \propto L_n(\beta)^{-(1-\alpha)} \, \Pi_n(d\beta) \]
is a version of the above empirical prior with an extra data-driven regularization, penalizing those $\beta$ values that fit the data too well when $L_n(\beta)$ is large. In any case, we recommend taking $\alpha \approx 1$ in applications so there is no practical difference between our proposal and a closer-to-genuine Bayes posterior with $\alpha=1$.  
The end result is a posterior distribution, $\Pi^n$, for $\beta$ that depends on $\alpha$, $\gamma$, and, in this case, the known $\sigma^2$.  

In terms of basic first-order properties of $\Pi^n$, such as asymptotic concentration rates, the results are not sensitive to the choice of $\alpha$ and $\gamma$.  That is, the same concentration rates are obtained for all $\alpha \in (0,1)$ and all $\gamma > 0$.  However, for higher-order properties, such as coverage of posterior credible sets, some conditions on $\alpha$ and $\gamma$ are required.  As discussed in Section~\ref{SS:theory}, we will require $\alpha + \gamma \leq 1$, so if $\alpha$ is close to 1, then $\gamma$ must be close to 0.  The apparent impact of $\gamma \approx 0$ is that the conditional prior for $\beta_S$, given $S$, in \eqref{eq:beta.prior.S} is wide and ``non-informative'' in a traditional sense.  But, for example, if we treat the rows of $X$ as iid samples with mean 0 and covariance matrix $\Sigma$, then for any fixed $S$, we have that $(X_S^\top X_S)^{-1} = n^{-1} \widehat\Sigma_S^{-1}$, where $\Sigma_S$ is the submatrix corresponding to configuration $S$ and the ``hat'' indicates an estimate based on the sample in $X$.  So the prior for $\beta_S$, given $S$, has a data-driven center and variance roughly $O(n^{-1})$.  Therefore, choosing $\gamma \approx 0$ as suggested by the theory also has some practical intuition: it effectively prevents the conditional prior for $\beta_S$, given $S$, from being ``too informative.''  

An important practical advantage of this formulation is that $\Pi^n$ is relatively simple.  Indeed, the conditional posterior distribution for $\beta_S$, given $S$, is just 
\begin{equation}
\label{eq:beta.post}
\pi^n(\beta_S \mid S) = \nm_{|S|}\bigl(\beta_S \mid \hat\beta_S, \tfrac{\sigma^2}{\alpha + \gamma} (X_S^\top X_S)^{-1} \bigr). 
\end{equation}
For variable selection-related tasks, the marginal posterior for the configuration, $S$, is the relevant object, and a closed-form expression is available:
\[ \pi^n(S) \propto \pi(S) \bigl(\tfrac{\gamma}{\alpha + \gamma} \bigr)^{|S|/2} \exp\{-\tfrac{\alpha}{2\sigma^2} \|y - \hat y_S\|^2\}, \]
where $\hat y_S$ is the fitted response based on the least squares fit to $(y, X_S)$. \citet{martin.mess.walker.eb} propose a Metropolis--Hastings procedure to sample from $S$ and, if $\beta$ samples are also desired, then one can augment the $S$ sampler by sampling from the conditional posterior for $\beta_S$, given $S$, along the way.

\subsection{Unknown $\sigma^2$}
\label{SS:unknown}

For the realistic case where the error variance is unknown, there are different strategies one can employ.  The simplest strategy, taken in \citet{martin.mess.walker.eb}, is to construct an estimator, $\hat\sigma^2$, and plug it in to the known-$\sigma^2$ formulas above.  They used a lasso-driven estimator, discussed in \citet{reid.tibshirani.friedman.2014}, in their numerical examples, and their method had very good performance.  But variance estimates post-selection can be unreliable  \citep[e.g.,][]{hkm2018.overfit}, which can impact other posterior summaries, such as credible regions, so we want to consider an alternative based on a prior distribution for $\sigma^2$.  

Consider an inverse gamma prior for $\sigma^2$, with density 
\[ \pi(\sigma^2) = b_0^{a_0} \Gamma(a_0)^{-1} (\sigma^2)^{-(a_0 + 1)} e^{-b_0/ \sigma^2}, \quad \sigma^2 > 0, \]
where $a_0$ and $b_0$ are fixed shape and scale parameters, respectively.  Incorporating this into the prior formulation described above, expanding the likelihood in \eqref{eq:likelihood} as  
\[ L_n(\beta, \sigma^2) = \nm_n(y \mid X\beta, \sigma^2 I) \propto (\sigma^2)^{-1/2} \exp\{-\tfrac{1}{2\sigma^2} \|y-X\beta\|^2\}, \]
to include $\sigma^2$, and combining the two as in \eqref{eq:update}, the following properties of the posterior distribution are easy to verify. First, the conditional posterior for $\beta_S$, given $S$ and $\sigma^2$ is exactly as in \eqref{eq:beta.post}; second, the conditional posterior distribution for $\sigma^2$, given $S$, is again inverse gamma with $\text{shape} = a_0 + \tfrac{\alpha n}{2}$ and $\text{scale} = b_0 + \tfrac{\alpha}{2} \|y - \hat y_S\|^2$; and, finally, the marginal posterior for $S$ is 
\begin{equation}
\label{eq:S.post}
\pi^n(S) \propto \pi(S) \bigl( \tfrac{\gamma}{\alpha + \gamma} \bigr)^{|S|/2} \bigl\{ b_0 + \tfrac{\alpha}{2} \|y-\hat y_S\|^2 \bigr\}^{-(a_0 + \alpha n / 2)}. 
\end{equation}
Therefore, the MCMC strategy described above to evaluate the posterior can proceed immediately with this alternative expression for $\pi^n$.

\section{Empirical Bayes predictive distribution}
\label{S:predictive}


Given the empirical Bayes posterior defined above, either for known or unknown $\sigma^2$, we can immediately obtain a corresponding predictive distribution.  Consider a pair $(\tilde X, \tilde y)$ where $\tilde X \in \RR^{d \times p}$ is a given matrix of explanatory variable values at which we seek to predict the corresponding response $\tilde y \in \RR^d$.  

If $\sigma^2$ were known, or if a plug-in estimator is used, then the conditional posterior predictive distribution of $\tilde y$, given $S$, is familiar, and given by 
\[ f_{\tilde X}^n(\tilde y \mid S) = \nm_d\bigl( \tilde y \mid \tilde X_S \hat\beta_S, \, \sigma^2 I_d + \tfrac{\sigma^2}{\alpha + \gamma} \tilde X_S (X_S^\top X_S)^{-1} \tilde X_S^\top \bigr). \]
To obtain the predictive distribution for $\tilde y$, we simply need to integrate out $S$ with respect to its posterior, i.e., 
\begin{equation}
\label{eq:pred}
f_{\tilde X}^n(\tilde y) = \sum_S \pi^n(S) \, f_{\tilde X}^n(\tilde y \mid S). 
\end{equation}
Of course, one cannot evaluate this sum because there are too many terms, but it is possible to use samples of $S$ from the marginal posterior, $\pi^n(\cdot)$, to get a Monte Carlo approximation of the predictive density $f_{\tilde X}^n(\tilde y)$ at some set values of $\tilde y$.  Alternatively, one can augment the aforementioned MCMC algorithm to sample $\tilde y$ from $f_{\tilde X}^n(\tilde y \mid S)$ along the Markov chain; see below.  Having a sample from the predictive distribution is advantageous when it comes to creating posterior credible sets for prediction.  For example, in the $d=1$ case, a 95\% posterior prediction interval can be found by computing quantiles of the sample taken from the predictive distribution.  

Very little changes when the inverse gamma prior for $\sigma^2$ is adopted. Indeed, the predictive density $f_{\tilde X}^n(\tilde y\mid S)$ is just the density for a $d$-variate Student-t distribution, with $2a_0 + \alpha n$ degrees of freedom, location $\tilde X_S \hat\beta_S$, and scale matrix 
\begin{equation}
\label{eq:scale}
\tfrac{b_0 + (\alpha/2)\|y - \hat y_S\|^2}{a_0 + \alpha n / 2} \bigl( I_d + \tfrac{1}{\alpha + \gamma} \tilde X_S (X_S^\top X_S)^{-1} \tilde X_S^\top \bigr). 
\end{equation}
From here, sampling from the predictive \eqref{eq:pred} can proceed exactly like before, with straightforward modifications to accommodate the Student-t instead of normal shape.  

For computation, \citet{martin.mess.walker.eb} recommend a simple Metropolis--Hastings scheme based on the marginal posterior distribution for $S$.  If the focus was strictly on variable selection tasks, so that only the posterior distribution of $S$ were relevant, then a shotgun stochastic search strategy could also be taken, as in \citet{ecap}, which avoids sampling on the complex $S$-space.  But here our focus is on prediction, so we want the samples of $S$ so that we can readily sample from the conditional predictive distribution of $\tilde y$, given $S$, along the way.  Specifically, if $q(S' \mid S)$ is a proposal function, then a single iteration of our Metropolis--Hastings sampler goes as follows:
\begin{enumerate}
\item Given a current state $S'$, sample $S_\text{tmp} \sim q(\cdot \mid S')$.
\vspace{-2mm}
\item Set $S=S_\text{tmp}$ with probability 
\[ \min\Bigl\{1, \frac{\pi^n(S')}{\pi^n(S_{\text{tmp}})} \frac{q(S_{\text{tmp}} \mid S')}{q(S' \mid S_{\text{tmp}})} \Bigr\}, \]
where $\pi^n$ is as in \eqref{eq:S.post}; otherwise, set $S=S'$.
\vspace{-2mm}
\item Sample $\tilde y$ from a $d$-variate Student-t distribution with $2a_0 + \alpha n$ degrees of freedom, location $\tilde X_S \hat\beta_S$, and scale matrix \eqref{eq:scale} depending on the given $S$.
\end{enumerate}
Repeating this process $T$ times, we obtain an approximate sample $\{\tilde y^{(t)}: t=1,\ldots,T\}$ from the predictive distribution \eqref{eq:pred} corresponding to a given $d \times p$ matrix $\tilde X$ of covariates at which prediction is desired.  In our implementation, we use a symmetric proposal distribution $q(S \mid S')$, i.e., one that samples $S$ uniformly from those models that differ from $S'$ in exactly one position, which simplifies the acceptance probability above since the $q$-ratio is identically 1.  

Of course, the quality of samples from the predictive distribution depends on that of the samples from the marginal posterior distribution for $S$.  It helps that there is a closed-form expression for $\pi^n(\cdot)$, but the configuration space is still very large and complicated, making it virtually impossible for a Markov chain to do a complete exploration in any reasonable amount of time.  Fortunately, a complete exploration of the space is not necessary.  The theory presented in \citet{martin.mess.walker.eb}---see, also, Appendix~\ref{S:bernoulli}---says that $\pi^n(S)$ will tend to be largest at/near the true configuration.  So with a warm start, based on, say, the lasso configuration, the proposed MCMC algorithm quickly explores the subspace of plausible configurations and, in turn, leads to high-quality predictions.  Justification for this claim is based on the strong empirical performance in Section~\ref{S:examples} below, in \citet{martin.mess.walker.eb}, and in other settings, e.g., \citet{lee.lee.lin.deb}, \citet{eb.gwishart}, \citet{ebcvg}, and \citet{ebpiecep}.

\section{Asymptotic properties}
\label{SS:theory}

\subsection{Setup}
\label{SS:setup}

The goal here is to explore the asymptotic properties of the empirical Bayes predictive distribution defined above.  In particular, in Section~\ref{SSS:rate} we bound the rate at which the posterior distribution $\Pi^n$ in \eqref{eq:update} concentrates on $\beta$ vectors that lead to accurate prediction of a new observation which, in certain cases, despite the high dimensionality, is close to the parametric root-$n$ rate.  Also, in Section~\ref{SSS:uq}, we investigate distributional approximations of the predictive distribution and corresponding uncertainty quantification properties.  Some of our results that follow rely on details presented in \citet{martin.mess.walker.eb} so, for the reader's convenience, we summarize the relevant points in Appendix~\ref{S:bernoulli}.  

To fix notation, etc., let $\beta^\star$ denote the true $p$-vector of regression coefficients with configuration $S_{\beta^\star} = \{j: \beta_j^\star \neq 0\}$.  Since $p \gg n$, quality estimation is hopeless without some underlying low-dimensional structure, and here the relevant low-dimensional structure is {\em sparsity}, i.e., the size $|S_{\beta^\star}|$ of the true configuration is small relative to $n$ or, more generally, to the rank $R$.  Rates in such problems are determined by the triple $(n,p,s^\star)$, where $s^\star$ is a sequence controlling the sparsity, increasing slowly with $n$.  \citet{verzelen2012} splits the entire class of problems into two cases, namely, {\em high-} and {\em ultra-high-dimensional}, based on whether $s^\star \log (p/s^\star)$ is small or large relative to $n$.  He shows that, for prediction-related tasks, for true $\beta^\star$'s that are $s^\star$-sparse, the optimal rate $\eps_n$ satisfies 
\[ \eps_n^2 = \min\{n^{-1} s^\star \log(p / s^\star), 1\}. \]
Here the phase transition between high- and ultra-high-dimensional problems is clear, in particular, that there is a limit to how accurate predictions can be in the latter case.  Since we aim for statements like ``the Hellinger distance between the true density and the predictive density in \eqref{eq:pred} is $\lesssim \eps_n$'' (e.g., Corollary~\ref{thm:predictive.rate}), and such conclusions are not meaningful if $\eps_n \not\to 0$, we will focus exclusively on Verzelen's ordinary high-dimensional case.  That is, we make the following {\em standing assumption:}
\begin{equation}
\label{eq:standing}
\text{$(n,p,s^\star)$ satisfies $s^\star \log (p/s^\star) = o(n)$ as $n \to \infty$}. 
\end{equation}
In other settings, like in Theorem~\ref{thm:oos.rate} and its corollary below, where it is necessary to separate $\beta^\star$ from $X\beta^\star$, additional assumptions about $X$ and $S^\star$ are required.  Fortunately, as we discuss in more detail below, it is known that such assumptions are not unreasonable in settings that satisfy \eqref{eq:standing}. 

As is typically done in theoretical analyses of the $p \gg n$ problem \citep[e.g.,][]{castillo.schmidt.vaart.reg}, we work in the case of known error variance.  Moreover, for simplicity and consistency with the existing literature, we assume that the rank $R$ of $X$ equals $n$.  Finally, in what follows, we will write ``$\E_{\beta^\star}$'' to denote expectation with respect to the $n$-vector $y$ defined in \eqref{eq:reg} with true regression coefficient $\beta^\star$.

\subsection{Concentration rates}
\label{SSS:rate}

For ease of presentation, in Theorem~\ref{thm:hellinger.rate} and its corollary, we focus on $d=1$, so the $\tilde X$ matrix can be replaced by a (column) $p$-vector $\tilde x$.  The results hold more generally, but they are rather cumbersome to present, so we defer those details to Appendix~\ref{S:dim}.

For a given $\tilde x \in \RR^p$ and particular values $\beta$ and $\beta^\star$, let $h_{\tilde x}(\beta^\star,\beta)$ denote the Hellinger distance between $\nm(\tilde x^\top \beta, \sigma^2)$ and $\nm(\tilde x^\top \beta^\star, \sigma^2)$.  Following \citet{dunson.compreg.2015}, define an unconditional Hellinger distance
\[ h(\beta^\star, \beta) = \Bigl\{ \int h_{\tilde x}^2(\beta^\star, \beta) \,Q_n(d\tilde x) \Bigr\}^{1/2}, \]
where $Q_n$ is the empirical distribution of those $p$-vectors that fill the rows of $X$.  Then the following theorem establishes the asymptotic concentration rate of the proposed empirical Bayes posterior relative to the prediction-focused metric $h(\beta^\star, \beta)$.  

\begin{thm}
\label{thm:hellinger.rate}
Let $\Pi^n$ be the empirical Bayes posterior defined in \eqref{eq:update}, and let $s^\star$ be a sequence such that \eqref{eq:standing} holds.  Then there exists constants $G,M > 0$ such that 
\[ \sup_{\beta^\star: |S_{\beta^\star}|=s^\star} \E_{\beta^\star} \Pi^n(\{\beta \in \RR^p: h(\beta^\star, \beta) > M\eps_n\}) \lesssim e^{-G n \eps_n^2} \to 0, \]
where $\eps_n^2 = n^{-1} s^\star \log (p/s^\star) \to 0$. 
\end{thm}

\begin{proof}
Without loss of generality, assume $\sigma^2=1$.  Let $k_{\tilde x}(\beta^\star,\beta)$ denote the Kullback--Leibler divergence of $\nm(\tilde x^\top \beta, 1)$ from $\nm(\tilde x^\top \beta^\star, 1)$.  Then we have 
\[ h_{\tilde x}^2(\beta^\star, \beta) \leq 2\,k_{\tilde x}(\beta^\star, \beta) = |\tilde x^\top (\beta-\beta^\star)|^2. \]  
From the above inequality, upon taking expectation over $\tilde x \sim Q_n$, we get  
\[ h^2(\beta^\star, \beta) \leq n^{-1} \|X(\beta-\beta^\star)\|^2. \]
Therefore, 
\[ \{\beta: h^2(\beta^\star, \beta) > M^2 \eps_n^2 \} \subseteq \{\beta: \|X(\beta-\beta^\star)\|^2 > M^2 n \eps_n^2\}, \]
and it follows from Theorem~1 in \citet{martin.mess.walker.eb}---see Appendix~\ref{S:bernoulli}---that, for suitable $M$, the expected $\Pi^n$-probability of the right-most event above is exponentially small, uniformly in $s^\star$-sparse $\beta^\star$.  
\end{proof}

The conditions here are different from those in \citet{jiang2007} and \citet{dunson.compreg.2015}, but it may help to compare the rates obtained.  Note that, beyond sparsity, no assumptions are made in Theorem~\ref{thm:hellinger.rate} above on the magnitude of $\beta^\star$, whereas the latter two papers assume $\|\beta^\star\|_1 = O(1)$ which requires either (a)~$s^\star$ grows slowly and non-zero signals vanish slowly, or (b)~$s^\star$ grows not-so-slowly and the non-zero signals vanish rapidly.  The more realistic case is (a), so suppose $s^\star \asymp (\log n)^k$ for some $k > 0$.  If $p$ is polynomial in $n$, i.e., $p \asymp n^K$ for any $K > 0$, then we have 
\[ \eps_n \asymp n^{-1/2} (\log n)^{(k+1)/2}, \]
which is nearly the parametric root-$n$ rate.  And if $p$ is sub-exponential in $n$, i.e., if $\log p \asymp n^r$ for $r \in (0,1)$, then $\eps_n$ is $n^{-(1-r)/2}$ modulo logarithmic terms which, again, is close to the parametric rate when $r$ is small.  In any case, if the analogy between the sparse normal means problem and the regression problem considered here holds up in the context of prediction, the minimax rate results in \citet{mukherjee.johnstone.2015} suggest that the rate $\eps_n$  in Theorem~\ref{thm:hellinger.rate} cannot be significantly improved.  Of course, posterior concentration rates in terms of $\|X(\beta-\beta^\star)\|$ have been established for other models, such as horseshoe \citep[e.g.,][]{ghosh.chakrabarti.shrink, pas.szabo.vaart.rate, pas.kleijn.vaart.2014}, so results like that in Theorem~\ref{thm:hellinger.rate} would apply for those methods as well.  

The next result connects the rate in Theorem~\ref{thm:hellinger.rate} to the posterior predictive density $f_x^n(\tilde y)$ defined in \eqref{eq:pred}.  Following the discussion above, this predictive density convergence rate is very fast, close to the root-$n$ rate in some cases.  

\begin{cor}
\label{thm:predictive.rate}
Let $f_{\tilde x}^\star(\tilde y) = \nm(\tilde y \mid \tilde x^\top \beta^\star, \sigma^2)$ denote the true distribution of the new observation, for a given $x$, and let $H(f_{\tilde x}^\star, f_{\tilde x}^n)$ denote the Hellinger distance between this and the predictive density $f_{\tilde x}^n$ in \eqref{eq:pred}.  Under the conditions of Theorem~\ref{thm:hellinger.rate}, 
\[ \sup_{\beta^\star: |S_{\beta^\star}|=s^\star} \E_{\beta^\star} \int H^2(f_{\tilde x}^\star, f_{\tilde x}^n) \, Q_n(d\tilde x) \lesssim \eps_n^2. \]
\end{cor}

\begin{proof}
Since $f_{\tilde x}^n(\tilde y) = \int \nm(\tilde y \mid \tilde x^\top \beta, \sigma^2) \, \Pi^n(d\beta)$, using convexity of $H^2$, Jensen's inequality, and Fubini's theorem we get  
\begin{align*}
\int H^2(f_{\tilde x}^\star, f_{\tilde x}^n) \, Q_n(d\tilde x) & \leq \int \int h_{\tilde x}^2(\beta^\star, \beta) \, \Pi^n(d\beta) \, Q_n(d\tilde x) \\
& = \int h^2(\beta^\star, \beta) \, \Pi^n(d\beta).
\end{align*}
For $M$ and $\eps_n$ as in Theorem~\ref{thm:hellinger.rate}, if we set $A_n = \{\beta: h(\beta^\star, \beta) \leq M \eps_n\}$, then the right-hand side above equals 
\begin{equation}
\label{eq:hbound2}
\int_{A_n} h^2(\beta^\star, \beta) \, \Pi^n(d\beta) + \int_{A_n^c} h^2(\beta^\star, \beta) \, \Pi^n(d\beta). 
\end{equation}
The first term is bounded by a constant times $\eps_n^2$ by definition of $A_n$.  And since Hellinger distance is no more than 2, the second term is bounded by a constant times $\Pi^n(A_n^c)$.  Theorem~\ref{thm:hellinger.rate} above shows that $\E_{\beta^\star} \Pi^n(A_n^c)$ is exponentially small, definitely smaller than $\eps_n^2$.  The claim follows since both terms in \eqref{eq:hbound2} are of order $\eps_n^2$ or smaller.  
\end{proof}

The particular metric in Theorem~\ref{thm:predictive.rate} measures the prediction quality for new $\tilde x$'s which are already in the current sample, that is, averaging over $\tilde x \sim Q_n$.  In other words, Theorem~\ref{thm:predictive.rate} considers a sort of in-sample prediction quality.  Intuitively, however, we expect that similar conclusions could be made for out-of-sample predictions, provided that the new $x$ value does not differ too much from the rows in the observed $X$.  Indeed, we are considering large $n$ so, if the rows of $X$ are sampled from a distribution $Q^\star$, then we can expect $Q_n$ to be a decent approximation of $Q^\star$, so results that involve averaging over $Q_n$ cannot be drastically different from the corresponding results with averaging over $Q^\star$.  Our simulation experiments investigate exactly this situation---new $\tilde x$ is an independent sample from the distribution that generated the original $X$---and the mean square prediction error results confirm the above intuition.

Formal out-of-sample prediction results are possible, but require additional assumptions about the design matrix.  Here we revert back to the general case of $d$-dimensional prediction, with $d \geq 1$, characterized by a $d \times p$ matrix $\tilde X$ at which prediction is desired.  Intuitively, if $\tilde X$ is genuinely new, then we have no direct measurements of $\tilde X \beta^\star$ as we would for in-sample prediction, so we cannot hope for quality prediction without accurate estimation of $\beta^\star$.  Since the response only provides direct information about the mean $X\beta^\star$, estimation of $\beta^\star$ requires disentangling $\beta^\star$ from $X\beta^\star$.  Towards this, following \citet[][Def.~2.3]{castillo.schmidt.vaart.reg} and \citet[][Eq.~11]{ariascastro.lounici.2014}, define the ``smallest scaled sparse singular value of $X$ of size $s$'' as 
\begin{equation}
\label{eq:kappa}
\lkappa(s; X) = n^{-1/2} \inf_{\beta \in \RR^p: 0 < |S_\beta| \leq s} \frac{\|X\beta\|}{\|\beta\|}, \quad s=0,1,\ldots,p. 
\end{equation}
\citet{ariascastro.lounici.2014} showed that a sparse $\beta^\star$, with $|S_{\beta^\star}|=s^\star$, is identifiable from a model with design matrix $X$ if and only if $\lkappa(2s^\star; X) > 0$.  So one must assume at least that in order to estimate $\beta^\star$ and to accurately predict at an arbitrary $\tilde X$.  

In Theorem~\ref{thm:oos.rate} below, we will need a complementary ``largest sparse singular value'' for the $d \times p$ matrix at which prediction is desired.  In particular, set 
\[ \ukappa(s; \tilde X) = d^{-1/2} \sup_{\beta \in \RR^p: 0 < |S_\beta| \leq s} \frac{\|\tilde X \beta\|}{\|\beta\|}, \quad s=0,1,\ldots,p. \]
The theorem and its corollary characterize the posterior and predictive distribution concentration rates, respectively, in terms of the $\lkappa$ and $\ukappa$ quantities.  Following their statements, we describe what these rates look like in some relevant cases.   

\begin{thm}
\label{thm:oos.rate}
Let $\Pi^n$ be the empirical Bayes posterior defined in \eqref{eq:update} and $s^\star$ a sequence such that \eqref{eq:standing} holds and $\lkappa(C s^\star; X) > 0$ for $C > 2$.  For fixed $d \geq 1$, let $\tilde X$ be a $d \times p$ matrix at which prediction is desired.  Then there exists positive constants $(C', G, M)$, with $C' > 2$, such that 
\[ \sup_{\beta^\star: |S_{\beta^\star}|=s^\star} \E_{\beta^\star} \Pi^n\Bigl(\Bigl\{\beta \in \RR^p: h_x(\beta^\star, \beta) > \frac{M \ukappa(C' s^\star; \tilde X) \eps_n}{\lkappa(C s^\star; X)} \Bigr\}\Bigr) \lesssim e^{-G n \eps_n^2} \to 0, \]
where $\eps_n^2 = n^{-1} s^\star \log (p/s^\star) \to 0$.  
\end{thm}

\begin{proof}
Again, without loss of generality, we assume $\sigma^2=1$.  Set 
\[ \Delta_n = \frac{M \ukappa(C' s^\star; \tilde X) \eps_n}{\lkappa(C s^\star; X)}. \]
From the total probability formula, the $\Pi^n$-probability in question is bounded by 
\[ \Pi^n(\{\beta \in \RR^p: h_x(\beta^\star, \beta) > \Delta_n, |S_\beta| \leq C'' s^\star\}) + \Pi^n(\{\beta \in \RR^p: |S_\beta| > C''s^\star\}). \]
Theorem~2 in \citet{martin.mess.walker.eb} shows that, for some $C'' > 1$, the expected value of the second term in the above display is exponentially small; see Appendix~\ref{S:bernoulli} for a precise statement.  So it suffices to focus on the first term.  If $\beta$ is such that $|S_\beta| \leq C''s^\star$, then $|S_{\beta-\beta^\star}| \leq C' s^\star$, where $C' = C''+1 > 2$.  So by definition of $\ukappa$, 
\[ h_{\tilde X}^2(\beta^\star, \beta) \leq \|\tilde X (\beta-\beta^\star)\|^2 \leq \ukappa^2(C's^\star; \tilde X) \|\beta-\beta^\star\|^2. \]
Therefore, 
\[ \{h_{\tilde X}(\beta^\star, \beta) > \Delta_n\} \cap \{|S_\beta| \leq C''s^\star\} \implies \|\beta-\beta^\star\| > \frac{M \eps_n}{\lkappa(C s^\star; X)}, \]
and Theorem~3 in \citet{martin.mess.walker.eb}---see Appendix~\ref{S:bernoulli}---establishes that the expected $\Pi^n$-probability of the latter event is exponentially small.
\end{proof}

Like in the in-sample prediction setting, this posterior concentration rate result in Theorem~\ref{thm:oos.rate} for out-of-sample prediction can be immediately converted into a rate for the predictive density $f_x^n(\tilde y)$ defined in \eqref{eq:pred} at the given $x$.  

\begin{cor}
\label{cor:oos.rate}
For a fixed $d \times p$ matrix $\tilde X$ at which prediction is desired, let $f_{\tilde X}^\star(\tilde y) = \nm_d(\tilde y \mid \tilde X^\top \beta^\star, \sigma^2 I_d)$ denote the true distribution of the new observation and $H(f_{\tilde X}^\star, f_{\tilde X}^n)$ the Hellinger distance between this and the predictive density $f_{\tilde X}^n$ in \eqref{eq:pred}.  Under the assumptions of Theorem~\ref{thm:oos.rate}, 
\[ \sup_{\beta^\star: |S_{\beta^\star}|=s^\star} \E_{\beta^\star} H^2(f_{\tilde X}^\star, f_{\tilde X}^n) \lesssim \frac{\ukappa^2(C' s^\star; \tilde X) \eps_n^2}{\lkappa^2(C s^\star; X)}. \]
\end{cor}

\begin{proof}
The proof proceeds exactly like that of Corollary~\ref{thm:predictive.rate} but for not needing to integrate over $\tilde X$ and applying Theorem~\ref{thm:oos.rate} instead of Theorem~\ref{thm:hellinger.rate}.  
\end{proof}

Of course, the $\eps_n$ in the above two results is the same as in Theorem~\ref{thm:hellinger.rate} and its corollary, so can still be close to the parametric root-$n$ rate.  However, the rate in these latter results depends in a non-trivial way on the design matrix $X$ and the new $\tilde X$ at which prediction is desired, through the $\lkappa$ and $\ukappa$ quantities.  A relevant case to consider is that where the rows of $X$ and of $\tilde X$ are iid Gaussian $p$-vectors with mean 0, variance 1, and correlation matrix $\Sigma$.  For a number of relevant correlation structures in $\Sigma$, e.g., independence, first-order autoregressive, block diagonal, etc., it is known that $\lkappa(s; X)$ is bounded away from 0, with high probability (as a function of the random $X$), for all $s \lesssim (n/\log p)^{1/2}$; see \citet[][Examples~7--8]{castillo.schmidt.vaart.reg} and \citet[][Sec.~2.6]{ariascastro.lounici.2014}.  So, at least for these cases, the $\lkappa$ term in the denominator is not expected to affect the rate in Theorem~\ref{thm:oos.rate} or its corollary.  Next, for the numerator, we can bound $\ukappa(s; \tilde X) \leq d^{-1/2} \|\tilde X_S\|_F$, where $S$ is a subset of size $|S| \leq s$ and $\|\cdot\|_F$ denotes the Frobenius norm.  And since the entries of $\tilde X$ are $O_p(1)$, we can conclude that $\ukappa(s; \tilde X) = O_p(s^{1/2})$.  Therefore, putting all this together, the upper bound obtained in Corollary~\ref{cor:oos.rate} is $\lesssim s^\star \eps_n^2$.  And for small $s^\star$, e.g., a power of $\log n$ like discussed above, the overall rate is still roughly root-$n$.  To put this ``$s^\star \eps_n^2$'' quantity in perspective, consider an oracle case where the configuration $S^\star$, of size $s^\star$, is known.  Then the mean square error for the oracle least squares estimator of the mean response, $\tilde X_{S^\star} \hat \beta_{S^\star}$, given $X$ and $\tilde X$, is 
\[ \E_{\beta^\star}\|\tilde X_{S^\star} \hat\beta_{S^\star} - \tilde X_{S^\star} \beta_{S^\star}^\star\|^2 = \text{tr}\{\tilde X_{S^\star} (X_{S^\star}^\top X_{S^\star})^{-1} \tilde X_{S^\star}^\top\}. \]
Plugging in the approximation, $n^{-1} X_{S^\star}^\top X_{S^\star} \approx \Sigma_{S^\star}$, where the latter is the submatrix of $\Sigma$ corresponding to the variables identified in $S^\star$, it is easy to check that the trace is $n^{-1}$ times a chi-square random variable with $ds^\star$ degrees of freedom, i.e., is $O_p(s^\star n^{-1})$, as a function of $(X,\tilde X)$.  Therefore, since the $s^\star \eps_n^2$ upper bound in Corollary~\ref{cor:oos.rate} is of the same order as the oracle mean square error for estimating the mean response at $\tilde X$, our rates cannot be significantly improved.

\subsection{Uncertainty quantification}
\label{SSS:uq}

Beyond prediction accuracy, one would also want the posterior predictive distribution to be calibrated in the sense that a $100(1-\zeta)$\% prediction interval, for $\zeta \in (0,\frac12)$, has coverage probability $1-\zeta$, at least approximately.  That is, one may ask if the predictive distribution above provides valid uncertainty quantification.  To first build some intuition, recall the predictive density in \eqref{eq:pred}:
\[ f_{\tilde X}^n(\tilde y) = \sum_S \pi^n(S) \, \nm_d\bigl(\tilde y \mid \tilde X_S \hat\beta_S, \sigma^2 I_d + \tfrac{\sigma^2}{\alpha + \gamma} \tilde X_S (X_S^\top X_S)^{-1} \tilde X_S^\top \bigr). \]
If we happen to have $\E_{\beta^\star}\pi^n(S^\star) \to 1$, then 
\begin{equation}
\label{eq:pred.approx}
f_{\tilde X}^n(\tilde y) \approx \nm_d\bigl(\tilde y \mid \tilde X_{S^\star} \hat\beta_{S^\star}, \sigma^2 I_d + \tfrac{\sigma^2}{\alpha + \gamma} \tilde X_{S^\star} (X_{S^\star}^\top X_{S^\star})^{-1} \tilde X_{S^\star}^\top \bigr), 
\end{equation}
and one will recognize the right-hand side as roughly the {\em oracle} predictive distribution, the one based on knowledge of the correct configuration $S^\star$.  The only difference between this predictive distribution and the standard fixed-model version found in textbooks is the factor $(\alpha + \gamma)^{-1}$.  We prefer our predictive density to be at least as wide as the oracle, which suggests choosing $(\alpha,\gamma)$ such that $\alpha + \gamma \leq 1$, maybe strictly less than 1.  With this choice, we expect the posterior prediction intervals to approximately achieve the nominal frequentist coverage probability. 


To make the above heuristics precise, write the posterior distribution for $\beta$ as 
\[ \Pi^n(B) = \sum_S \pi^n(S) \, \bigl\{ \nm_{|S|}\bigl(\hat\beta_S, \tfrac{\sigma^2}{\alpha + \gamma} (X_S^\top X_S)^{-1} \bigr) \otimes \delta_{0_{S^c}} \bigr\}(B), \quad B \subseteq \RR^p. \]
For the given $d \times p$ matrix $\tilde X$ at which prediction is desired, set $\psi = \tilde X \beta$.  Then the derived posterior distribution for $\psi$ is 
\begin{equation}
\label{eq:post.psi}
\Pi_\psi^n(A) := \sum_S \pi^n(S) \, \nm_d\bigl(A \mid \hat\psi_S, \sigma^2 V_S \bigr), \quad A \subseteq \RR^d, 
\end{equation}
where 
\[ \hat\psi_S = \tilde X_S \hat\beta_S \quad \text{and} \quad V_S = (\alpha + \gamma)^{-1} \tilde X_S (X_S^\top X_S)^{-1} \tilde X_S^\top. \]
Then we have the following Bernstein--von Mises theorem, similar to that in \citet{ebcvg}, which formally establishes a Gaussian approximation to the marginal posterior distribution of $\psi$, which will almost immediately justify \eqref{eq:pred.approx}.

\begin{thm}
\label{thm:bvm}
Write $d_{\text{\sc tv}}(P,Q) = \sup_A |P(A) - Q(A)|$ for the total variation distance between probability measures $P$ and $Q$.  Let $\nm_d(\hat\psi_{S^\star}, \sigma^2 V_{S^\star})$ denote the oracle posterior for $\psi$ based on knowledge of the true configuration $S^\star$.  If $\E_{\beta^\star} \pi^n(S^\star) \to 1$, then 
\[ \E_{\beta^\star} d_{\text{\sc tv}}\bigl( \Pi_\psi^n, \nm_d(\hat\psi_{S^\star}, \sigma^2 V_{S^\star}) \bigr) \to 0. \]
\end{thm}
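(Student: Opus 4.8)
The plan is to exploit the mixture structure of $\Pi_\psi^n$. By construction $\Pi_\psi^n = \sum_S \pi^n(S)\,\nm(\hat\psi_S,\sigma^2 v_S)$ is a finite mixture of Gaussians over configurations $S$, and the component indexed by the true configuration $S^\star$ is \emph{exactly} the oracle posterior $\nm(\hat\psi_{S^\star},\sigma^2 v_{S^\star})$. Since total variation distance is (half) an $L^1$ norm on signed measures, it is convex in its first argument, so for every data realization
\[ d_{\text{\sc tv}}\bigl(\Pi_\psi^n,\nm(\hat\psi_{S^\star},\sigma^2 v_{S^\star})\bigr) \le \sum_S \pi^n(S)\, d_{\text{\sc tv}}\bigl(\nm(\hat\psi_S,\sigma^2 v_S),\nm(\hat\psi_{S^\star},\sigma^2 v_{S^\star})\bigr). \]
The $S=S^\star$ term is zero and every other term is at most $1$, so the right-hand side is at most $\sum_{S\neq S^\star}\pi^n(S) = 1-\pi^n(S^\star)$. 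Because this bound holds pointwise in $y$ and both sides are bounded measurable functions of the data, taking $\E_{\beta^\star}$ gives $\E_{\beta^\star} d_{\text{\sc tv}}(\Pi_\psi^n,\nm(\hat\psi_{S^\star},\sigma^2 v_{S^\star})) \le 1-\E_{\beta^\star}\pi^n(S^\star)$, and the right-hand side tends to $0$ by hypothesis, which is the claim.

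I do not expect a genuine obstacle inside this argument; the substance of the theorem is entirely carried by the hypothesis $\E_{\beta^\star}\pi^n(S^\star)\to1$, i.e., by posterior selection consistency, which is the actual hard part and which I would import from Theorems~4--5 of \citet{martin.mess.walker.eb} (with the corrections in the supplement), valid under a beta-min type lower bound on the nonzero signals together with the sparsity and design assumptions used there. The only point that needs a little care is to keep the inequality at the level of a fixed data set before integrating over $y$, and to resist the temptation to bound each component-wise distance $d_{\text{\sc tv}}(\nm(\hat\psi_S,\sigma^2 v_S),\nm(\hat\psi_{S^\star},\sigma^2 v_{S^\star}))$ through the gaps $|\hat\psi_S-\hat\psi_{S^\star}|$ and $|v_S/v_{S^\star}-1|$; that route is both unnecessary and awkward to control uniformly over $S$, whereas the crude bound by $1$ is exactly what makes the proof short.

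Finally, I would add a remark that the same two-line argument, with $\sigma^2 v_S$ replaced everywhere by $\sigma^2(1+v_S)$, shows $\E_{\beta^\star} d_{\text{\sc tv}}(f_x^n,\nm(x^\top\hat\beta_{S^\star},\sigma^2(1+v_{S^\star})))\to0$, so that the sample quantiles used to build the prediction interval from $f_x^n$ converge to those of the oracle predictive, which delivers the advertised asymptotic coverage (with $(\alpha+\gamma)^{-1}\le1$ ensuring the interval is no narrower than the textbook oracle interval). If a quantitative bound on $1-\E_{\beta^\star}\pi^n(S^\star)$ is available from the selection-consistency analysis, the displayed inequality shows that the total variation convergence, and hence the coverage error, inherits that same rate.
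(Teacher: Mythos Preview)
Your argument is correct and is essentially the same as the paper's: both exploit the mixture representation of $\Pi_\psi^n$, observe that the $S=S^\star$ component coincides with the oracle, and bound the remaining contribution by a constant times $1-\pi^n(S^\star)$ before taking expectations. The only cosmetic difference is that you invoke convexity of $d_{\text{\sc tv}}$ to obtain the bound $1-\pi^n(S^\star)$, whereas the paper applies $|\sum_i a_i|\le\sum_i|a_i|$ at the level of a fixed Borel set and then takes the supremum, arriving at $2\{1-\pi^n(S^\star)\}$; your constant is slightly sharper but the content is identical.
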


\begin{proof}
Define $D_n(A) = | \Pi_\psi^n(A) - \nm_d(A \mid \hat\psi_{S^\star}, \sigma^2 V_{S^\star}) |$ for Borel sets $A \subseteq \RR^d$.  Since $|\sum_i a_i| \leq \sum_i |a_i|$, we immediately get the following upper bound:
\[ D_n(A) \leq \sum_S \pi^n(S) \bigl| \nm_d(A \mid \hat\psi_S, \sigma^2 V_S) - \nm_d(A \mid \hat\psi_{S^\star}, \sigma^2 V_{S^\star}) \bigr|. \]
The absolute difference above is 0 when $S=S^\star$ and bounded by 2 otherwise, so 
\[ d_{\text{\sc tv}}\bigl( \Pi_\psi^n, \nm_d(\hat\psi_{S^\star}, \sigma^2 V_{S^\star}) \bigr) \leq 2 \sum_{S \neq S^\star} \pi^n(S) = 2 \{1 - \pi^n(S^\star)\}. \]
After taking expectation of both sides, the upper bound vanishes by assumption.  
\end{proof}

Our Bernstein--von Mises theorem enjoys a relatively simple proof thanks to two things: the conjugate prior and the assumption that $\E_{\beta^\star} \pi^n(S^\star) \to 1$.  First, as a result of using a conjugate  (empirical) prior for $\beta_S$, given $S$, the marginal posterior distribution for $\psi$ is exactly a mixture of Gaussians as in \eqref{eq:post.psi}.  Therefore, if the posterior for $S$ concentrates on $S^\star$, then the result follows immediately.  If not for this conjugacy, e.g., if one used the Laplace-type priors from \citet{castillo.schmidt.vaart.reg}, then a proof of result like that in Theorem~\ref{thm:bvm} would be much more involved, even with the posterior for $S$ concentrating on $S^\star$.  Second, the ``selection consistency'' property, namely, $\E_{\beta^\star} \pi^n(S^\star) \to 1$, is one that has been extensively studied in the Bayesian literature on high-dimensional regression.  Beyond the conditions needed for the rate results in Section~\ref{SSS:rate}, to be able to detect the correct set of variables, the non-zero coefficients need to be sufficiently large.  More formally, these considerations lead to a so-called ``beta-min condition'' \citep[e.g.,][]{ariascastro.lounici.2014, buhlmann.geer.book, castillo.schmidt.vaart.reg}, namely, \begin{equation}
\label{eq:beta.min}
\min_{j \in S_{\beta^\star}} |\beta_j^\star| \gtrsim \Bigl\{ \frac{\sigma^2 \log p}{n \lkappa^2(C|S_{\beta^\star}|; X)} \Bigr\}^{1/2}. 
\end{equation}
Note that the definition of ``sufficiently large'' here depends on $\lkappa$ and, therefore, implicitly assumes that this quantity is positive.  Under the beta-min condition, \citet{martin.mess.walker.eb} establish selection consistency for the empirical Bayes posterior $\Pi^n$ under investigation here; see Appendix~\ref{S:bernoulli} for a precise statement.  It is intuitively clear that a condition like \eqref{eq:beta.min} is needed in order to identify the correct $S^\star$, and it is apparently not too strong of an assumption given that existing methods are able to recover $S^\star$ empirically for a wide range of examples.  We will have more to say about the beta-min condition in the context of our main result of this section, Corollary~\ref{cor:cvg} below, on the coverage probability of credible intervals derived from the posterior predictive distribution.   

The Gaussian approximation to the marginal posterior $\Pi_\psi^n$ should not be a big surprise, given its relatively simple form in \eqref{eq:post.psi} and the strong selection consistency property discussed above.  Of critical importance for us here is that Theorem~\ref{thm:bvm} implies a corresponding Bernstein--von Mises theorem for the predictive density.   

\begin{cor}
\label{cor:bvm}
Under the conditions of Theorem~\ref{thm:bvm}, 

\[ \E_{\beta^\star} d_{\text{\sc tv}} \bigl( f_{\tilde X}^n, \nm_d(\hat\psi_{S^\star}, \sigma^2(I_d + V_{S^\star}) \bigr) \to 0. \]
\end{cor}

\begin{proof}
The posterior and oracle predictive distributions are convolutions of $\nm_d(0,\sigma^2I_d)$ with $\Pi_\psi^n$ and $\nm_d(\hat\psi_{S^\star}, \sigma^2 V_{S^\star})$, respectively.  
So the claim follows from Theorem~\ref{thm:bvm} and general results on information loss, namely, Lemma~B.11 and Equation~(B.14) in \citet{ghosal.vaart.book}.  
\end{proof}

The conclusion of Corollary~\ref{cor:bvm} is that the predictive distribution will closely resemble the oracle predictive distribution based on knowledge of $S^\star$.  To visualize this, we carry out a small simulation study where data $y$ is generated from model \eqref{eq:reg}, with $\sigma=1$, under the ``very-sparse'' settings described in Section~\ref{SSS:very.sparse} below.  In particular, we have $n=100$ and $p=250$, but there are only $|S^\star|=5$ non-zero coefficients, each of magnitude $A=4$.  The $X$ matrix has rows that are iid $p$-variate normal with mean 0, variance 1, and a first-order autoregressive correlation structure with parameter $r=0.5$.  The goal is to predict a new $\tilde y$ of dimension $d=1$, where the corresponding $\tilde x$ vector is an independent draw from the same $p$-variate normal distribution that generated $X$.  Figure~\ref{fig:pred} shows a histogram of a sample drawn from our posterior predictive distribution with the corresponding oracle predictive density function overlaid.  These computations were actually done in an unknown-$\sigma^2$ scenario, so that the oracle is a shifted and scaled Student-t density.  That the two distributions match very closely confirms Corollary~\ref{cor:bvm}, but it is perhaps surprising that the accuracy of the normal approximation kicks in with only $n=100$, even for a relatively high-dimensional setting.  

\begin{figure}[t]
\begin{center}\scalebox{0.7}{\includegraphics{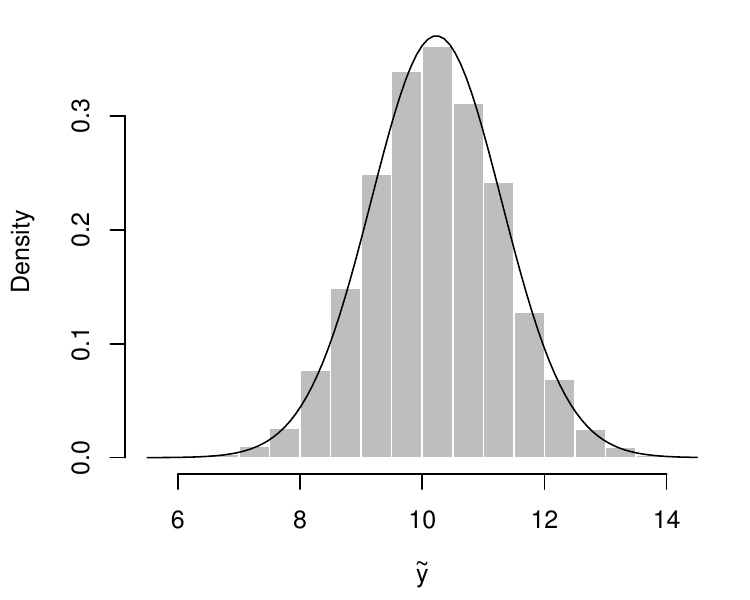}}
\end{center}
\caption{Histogram of a Monte Carlo sample drawn from the posterior predictive distribution, $f_x^n$ in \eqref{eq:pred}, and the corresponding oracle predictive density function overlaid.}
\label{fig:pred}
\end{figure}


The Bernstein--von Mises result in Corollary~\ref{cor:bvm} is in terms of a strong, total variation distance, which implies that central $1-\zeta$ regions derived from the posterior predictive distribution are approximately 
\[ \bigl\{\tilde y \in \RR^d: (\tilde y - \hat\psi_{S^\star})^\top (I_d + V_{S^\star})^{-1} (\tilde y - \hat\psi_{S^\star}) \leq \sigma^2 \chi_{\zeta/2}^2(d) \bigr\}, \]
where $\chi_{\zeta/2}^2(d)$ is the upper-$\zeta/2$ quantile of $\chisq(d)$.  For the special case $d=1$, and in the original notation, the posterior prediction interval for $\tilde y$, at a covariate (column) vector $\tilde x$, is approximately 
\[ x^\top \hat \beta_{S^\star} \pm z_{\zeta/2} \, \sigma \bigl\{ 1 + (\alpha + \gamma)^{-1} \tilde x_{S^\star}^\top (X_{S^\star}^\top X_{S^\star})^{-1} \tilde x_{S^\star} \bigl\}^{1/2}, \]
where $z_{\zeta/2}$ is the upper-$\zeta/2$ quantile of the standard normal distribution.  Clearly, this is a $100(1-\zeta)$\% frequentist prediction interval for a new $\tilde y$ drawn from model \eqref{eq:reg}, with the new covariate vector $\tilde x$, provided that $\alpha + \gamma \leq 1$.  Moreover, as this limiting credible interval matches that of an oracle who knows $S^\star$, the size cannot be improved.  

\begin{cor}
\label{cor:cvg}
Under the conditions of Theorem~\ref{thm:bvm}, if $\alpha + \gamma \leq 1$, then prediction intervals for a new $\tilde y$ drawn from model \eqref{eq:reg}, with the new covariate vector $\tilde x \in \RR^p$, derived from the empirical Bayes posterior predictive distribution \eqref{eq:pred} attain the nominal frequentist coverage probability and are of optimal size asymptotically.  
\end{cor}

To conclude this section, we give a few remarks to put the result in Corollary~\ref{cor:cvg} in perspective.  First, to our knowledge, there are no uncertainty quantification results for the Bayes solution in the high-dimensional regression setting based on the horseshoe prior, although the developments in \citet{pas.szabo.vaart.uq} would suggest that results along this line are possible under appropriate conditions.  More recently,  \citet{belitser.ghosal.ebuq} investigated the uncertainty quantification properties of an empirical Bayes solution closely related to that in \citet{martin.mess.walker.eb} adopted here, but not in the context of prediction.  Roughly, they show that suitably constructed empirical Bayes credible balls for the full $\beta$ vector approximately achieve the nominal frequentist coverage probability.  For example, in their Corollary~4, for any credibility level $\zeta$, they identify an $\ell_2$-norm ball 
\[ B_n(\zeta) = \{\beta \in \RR^p: \|\beta-\hat\beta\| \leq M_\zeta \, \hat\rho_n^\star\}, \]
where $\hat\beta$ is the empirical Bayes posterior mean, $\hat\rho_n^\star$ is a function depending on data, on features of $\beta^\star$, and on other quantities, and $M_\zeta$ is a constant, such that 
\[ \sup_{\beta^\star} \prob_{\beta^\star}\{ B_n(\zeta) \not\ni \beta^\star\} \leq \zeta, \]
with the supremum taken over a class of vectors $\beta^\star$ that satisfy an {\em excessive bias restriction}.  The specific details are not relevant for the present discussion, so suffice it to say that the excessive bias restriction rules about $\beta^\star$ are difficult to detect, such as sparse vectors whose non-zero entries are relatively small.  Therefore, sparsity plus the beta-min condition \eqref{eq:beta.min} implies the excessive bias restriction.  Consequently, if $\tilde x$ is a new $p$-vector at which prediction is required, one can naively convert the above credible ball into a credible interval for the mean response $\mu=\mu(\tilde x)$ at $\tilde x$, i.e., 
\[ \{\mu \in \RR: |\mu - \tilde x^\top \hat\beta| \leq M_\zeta \hat\rho_n^\star / \|\tilde x\|\}, \]
and this, in turn, can be suitably enlarged for predicting an independent response at $\tilde x$.  Although this prediction interval would asymptotically achieve the target coverage probability under the weaker excessive bias restriction, it still has some theoretical and practical disadvantages.  First, a number of the ingredients in the credible ball $B_n(\zeta)$ and in the corresponding prediction interval are either unspecified (e.g., depend on ``sufficiently large'' constants) or depend on features of the unknown $\beta^\star$, so putting the theory into practice is not straightforward.  Second, computation of this alternative prediction interval is more expensive than our proposed approach based on \eqref{eq:pred} because the high-dimensional $\beta$ cannot be integrated out.  Third, because of their indirect/naive construction, it is likely that the prediction intervals described above are less efficient than those in Corollary~\ref{cor:cvg}, which are optimal.  So even though our uncertainty quantification results here make stronger assumptions than the weakest of those appearing in other contexts in the literature, our conclusions are stronger and our implementation is easier.  Plus, our simulation experiments show strong empirical performance across various settings.

\section{Numerical results}
\label{S:examples}

\subsection{Methods}

Here we investigate the performance of our empirical Bayes prediction method compared to existing methods.  Our method, which we denote by {\em EB}, is as described in Section~\ref{S:predictive}, with the following hyperparameter settings:
\begin{itemize}
\item the complexity prior $q_n$ for the size $|S|$ in \eqref{eq:complexity} uses $a=0.05$ and $c=1$; 
\vspace{-2mm}
\item the posterior construction in Section~\ref{S:review} uses $\gamma=0.005$ in \eqref{eq:beta.prior.S} and $\alpha=0.99$ in \eqref{eq:update}; 
\vspace{-2mm}
\item and the inverse gamma prior for $\sigma^2$ described in Section~\ref{SS:unknown} has shape and scale parameters $a_0=0.01$ and $b_0=4$, respectively. 
\end{itemize}
The latter two are the default settings in the R package {\tt ebreg}.  This is compared to predictions based on the horseshoe, denoted by {\em HS}, using the default Jeffreys prior for $\sigma^2$ implemented in the {\tt horseshoe} package \citep{horseshoe.package}.  That package does not return samples from the predictive distribution, but these are easy to obtain from the $(\beta,\sigma^2)$ samples.  In our simulations, both the EB and HS methods return $5000$ posterior predictive samples after a burn-in of 1000.   The aforementioned methods give full predictive distributions, which yield both point predictions and prediction intervals.  We also compare with point predictions obtained from lasso and adaptive lasso using the R packages {\tt lars} \citep{lars} and {\tt parcor} \citep{parcor}, respectively.

\subsection{Simulated data experiments}

The theory presented in Section~\ref{SS:theory} focuses on the high- but not ultra-high-dimensional setting described in \citet{verzelen2012}, i.e., where $(n,p,s^\star)$ are such that $s^\star \log(p / s^\star)$ is small compared to $n$.  So in our examples presented below, we consider settings, common in medical and social science applications, with $p$ ranging from just slightly larger than $n$ to several times larger than $n$.  To keep the focus on the effect of the ambient and effective dimensions, i.e., $p$ and $s^\star$, on prediction performance, throughout these experiments we take $n=100$ fixed, and vary $(p,s^\star)$ accordingly.  In limited runs with larger sample sizes, i.e., $n \in \{200, 300, 400\}$, we found that the relative comparisons between methods was comparable, so these results are not presented here.  We investigate the prediction error, the coverage rates, and length of prediction intervals produced by the proposed method compared to the above competitors.  In particular, we consider univariate predictions ($d=1$) in an ``out-of-sample'' context where the new $\tilde x$ vector is independently sampled from that same $p$-variate normal distribution described above, and the goal is to predict a new response $\tilde y = \tilde x^\top \beta^\star + \sigma z$ from the model \eqref{eq:reg}.  
 
The take-away message here is that EB is generally better than HS across a range of settings, in terms of both prediction accuracy and uncertainty quantification.  In particular, first, in the out-of-sample mean square prediction error comparisons, there are cases where the two methods have comparable performance and a number of cases where EB is significantly better, but no cases where HS is significantly better.  Second, the prediction intervals for the two methods both generally are within an acceptable range of the target 95\% prediction coverage probability but, with the exception of a few cases at $p=500$ in the very-sparse setting (Table~\ref{table:len}), the EB intervals tend to be shorter than the HS intervals.   On top of its strong statistical performance, our EB method is more efficient computationally, as it generally finishes 5 times faster than the implementation of HS in the corresponding R package.

\subsubsection{Very-sparse settings}
\label{SSS:very.sparse}

We select $s^\star=5$ specific $\beta_j$'s to be non-zero, with the rest being zero. In particular, we place non-zero values at positions 3, 4, 15, 22, and 25 in the $p$-vector $\beta$. This configuration captures a number of different features: 3 and 4 are adjacent, 4 and 15 have a large gap between them, and 22 and 25 are close neighbors but not adjacent.  All of the non-zero $\beta_j$'s take value $A$, where $A \in \{2,4,8\}$.  The rows of the design matrix, $X$, are $p$-variate normal with zero mean, unit variance, and first-order autoregressive dependence structure, with pairwise correlation equals $r^{|i-j|}$ and $r \in \{0.2, 0.5, 0.8\}$.  For each $(A,r)$ pair, we consider $p\in \{125, 250, 500\}$, which yields a total of 27 different settings.  MSPEs are shown in Table~\ref{table:mspe} and the prediction interval coverage probabilities and mean lengths are presented in Tables~\ref{table:covg} and \ref{table:len}, respectively.  Summaries of the standard errors are provided in the respective table captions, and based on 250 runs.  

\begin{table}[t]
{\small 
\begin{center}  
\begin{tabular}{ccccccccccc}
    \toprule
    \multirow{2}{*}{$p$} & 
&  \multicolumn{3}{c}{$A=2$}& \multicolumn{3}{c}{$A=4$}& \multicolumn{3}{c}{$A=8$}\\
     & & $r=0.2$ & $0.5$ & $0.8$ & $r=0.2$ & $0.5$ & $0.8$ & $r=0.2$ & $0.5$ & $0.8$\\
     \midrule
    125 & EB & 0.86 & 1.13 & 0.99 & 1.15 & 0.91 & 0.94 & 0.98 & 0.99 & 1.22\\  
     & HS & 0.89 & 1.21 & 1.07 & 1.20 & 1.01 & 1.05 & 1.03 & 1.07 & 1.31\\
     & Lasso & 1.18 & 1.51 & 1.38 & 1.57 & 1.48 & 1.48 & 1.66 & 1.51 & 1.90\\
     & Alasso & 0.93 & 1.15 & 1.05 & 1.70 & 1.45 & 1.76 & 5.20 & 3.53 & 8.99\\
     \hline
    250  & EB & 1.05 & 1.10 & 1.12 & 1.07 & 1.24 & 0.87 & 1.15 & 1.07 & 0.96\\ 
     & HS & 1.12 & 1.19 & 1.17 & 1.12 & 1.34 & 0.98 & 1.20 & 1.12 & 1.00\\
     & Lasso & 1.31 & 1.50 & 1.52 & 1.38 & 1.69 & 1.35 & 1.31 & 1.35 & 1.33\\
     & Alasso & 1.07 & 1.14 & 1.23 & 1.50 & 1.92 & 2.07 & 3.47 & 4.14 & 5.60\\
    \hline
    500 &EB & 0.97 & 0.93 & 0.93 & 0.93 & 1.16 & 1.08 & 1.01 & 1.02 & 1.17\\ 
     & HS & 1.00 & 1.08 & 0.99 & 1.09 & 1.15 & 1.05 & 1.12 & 1.04 & 1.19\\
     & Lasso & 1.28 & 1.37 & 1.44 & 1.49 & 1.35 & 1.25 & 1.43 & 1.55 & 1.44\\
     & Alasso & 0.98 & 1.00 & 1.12 & 1.35 & 1.44 & 2.14 & 4.54 & 3.83 & 6.52\\
    \bottomrule
\end{tabular}
\end{center}
}
\caption{Comparison of mean square prediction error (MSPE) for the four different methods across various very sparse settings---of dimension $p \in \{125, 250, 500\}$, signal size $A \in \{2,4,8\}$, and correlation $r \in \{0.2, 0.5, 0.8\}$---as described in the text. The standard errors are generally between 0.08 and 0.98.
}
\label{table:mspe}
\end{table}

EB performs very well across all the settings in terms of MSPE, with HS performing similarly, and both generally beating lasso and adaptive lasso. The standard errors generally range between 0.08 and 0.98, with an average of 0.18. Based on these standard errors, there are no significant differences in prediction performance between EB and HS, at least not in terms of MSPE.  However, there is a substantial difference in terms of computation time.  EB is based on a two-groups or spike-and-slab model formulation, generally believed to be too expensive to compute.  But contrary to this popular belief, EB's run-time is about 20\% of that for HS, consistent with the claims made by \citet{ebcvg} in a different context.   




For the 95\% prediction interval comparisons, we compare EB and HS to that of an {\em oracle} who knows the true configuration $S^\star$.  Of course, the coverage probabilities for the oracle prediction interval are exactly 0.95, but, according to Table~\ref{table:covg}, both EB and HS have prediction coverage probability within an acceptable range of the target 95\% level.  And in terms of interval lengths, Table~\ref{table:len} reveals that both EB and HS are comparable in efficiency to the oracle prediction intervals.  This confirms the claims made about EB prediction intervals in Section~\ref{SS:theory}.   There are, however, a few cases in the $p=500$ row of Table~\ref{table:len} where HS gives significantly shorter interval lengths.  Obviously, the $p=500$ case is generally harder, but it is not clear if this performance difference is just a coincidence or if there is something special about these cases.  A theoretical investigation into the uncertainty quantification properties of the horseshoe prior clearly is needed.  

\begin{table}[t]
{\small 
\begin{center}  
\begin{tabular}{ccccccccccc}
    \toprule
    \multirow{2}{*}{$p$} & 
&  \multicolumn{3}{c}{$A=2$}& \multicolumn{3}{c}{$A=4$}& \multicolumn{3}{c}{$A=8$}\\
     & & $r=0.2$ & $0.5$ & $0.8$ & $r=0.2$ & $0.5$ & $0.8$ & $r=0.2$ & $0.5$ & $0.8$\\
     \midrule
    125 & EB & 0.96 & 0.94 & 0.96 & 0.95 & 0.96 & 0.97 & 0.97 & 0.96 & 0.94\\  
     & HS & 0.96 & 0.92 & 0.96 & 0.93 & 0.96 & 0.94 & 0.96 & 0.94 & 0.92\\
     \hline
    250 & EB & 0.95 & 0.96 & 0.95 & 0.96 & 0.94 & 0.96 & 0.94 & 0.96 & 0.95\\ 
     & HS & 0.95 & 0.96 & 0.94 & 0.96 & 0.92 & 0.96 & 0.94 & 0.94 & 0.95\\
    \hline
    500 & EB & 0.95 & 0.95 & 0.96 & 0.96 & 0.94 & 0.96 & 0.96 & 0.93 & 0.94\\ 
     & HS & 0.95 & 0.94 & 0.95 & 0.93 & 0.94 & 0.96 & 0.95 & 0.92 & 0.93\\
    \bottomrule
\end{tabular}
\end{center}
}
\caption{Comparison of coverage probability for the two different 95\% prediction intervals across various very sparse settings---of dimension $p \in \{125, 250, 500\}$, signal size $A \in \{2,4,8\}$, and correlation $r \in \{0.2, 0.5, 0.8\}$---as described in the text. Standard errors are between 0.01 and 0.02. }
\label{table:covg}
\end{table}

\begin{table}[t]
{\small 
\begin{center}  
\begin{tabular}{ccccccccccc}
    \toprule
    \multirow{2}{*}{$p$} & 
&  \multicolumn{3}{c}{$A=2$}& \multicolumn{3}{c}{$A=4$}& \multicolumn{3}{c}{$A=8$}\\
     & & $r=0.2$ & $0.5$ & $0.8$ & $r=0.2$ & $0.5$ & $0.8$ & $r=0.2$ & $0.5$ & $0.8$\\
     \midrule
    125 & EB & 4.12 & 4.13 & 4.15 & 4.11 & 4.14 & 4.16 & 4.13 & 4.10 & 4.17\\  
     & HS & 4.15 & 4.16 & 4.17 & 4.15 & 4.18 & 4.16 & 4.15 & 4.13 & 4.16\\
     & Oracle & 4.06 & 4.07 & 4.07 & 4.05 & 4.09 & 4.08 & 4.07 & 4.04 & 4.07\\
     \hline
    250 & EB & 4.13 & 4.15 & 4.18 & 4.12 & 4.14 & 4.13 & 4.15 & 4.12 & 4.14\\ 
     & HS & 4.14 & 4.17 & 4.15 & 4.13 & 4.14 & 4.09 & 4.16 & 4.12 & 4.12\\
     & Oracle & 4.07 & 4.09 & 4.09 & 4.06 & 4.08 & 4.05 & 4.09 & 4.07 & 4.05\\
    \hline
    500 & EB & 4.12 & 4.14 & 4.17 & 4.08 & 4.15 & 4.15 & 4.10 & 4.12 & 4.18\\ 
     & HS & 4.10 & 4.08 & 4.11 & 4.05 & 4.09 & 4.07 & 4.05 & 4.09 & 4.10\\
     & Oracle & 4.07 & 4.08 & 4.08 & 4.04 & 4.08 & 4.05 & 4.04 & 4.06 & 4.09\\
    \bottomrule
\end{tabular}
\end{center}
}
\caption{Comparison of mean length for the three different 95\% prediction intervals across various very sparse settings---of dimension $p \in \{125, 250, 500\}$, signal size $A \in \{2,4,8\}$, and correlation $r \in \{0.2, 0.5, 0.8\}$---as described in the text. Standard errors are between 0.01 and 0.03.}
\label{table:len}
\end{table}


\subsubsection{Less-sparse settings}

Let $\Delta \in (0,1)$ control the density of the signals, so that $s^\star = \Delta p$.  Obviously, if the density is too large, then the problem will eventually turn ultra-high-dimensional, so we focus on a limited range of small $\Delta$ values such that $\Delta p \log(\Delta^{-1}) \leq 0.8 n$, where the 0.8 factor keeps the problem at a practically reasonable level of difficulty.  (We tried other more extreme cases, e.g., $p=500$ and $\Delta=0.1$, and found that all the methods performed poorly in terms of prediction, so apparently that setting is just too difficult.)  Here we repeat the same comparisons as in the previous section, adding comparisons across different values of $\Delta$; of course, the range of $\Delta$ for which the problem is not ultra-high-dimensional depends on $p$.  Here we found that that adaptive lasso predictions were, across the board, not competitive, so we removed it from the comparisons.  


\begin{table}[t]
{\small 
\begin{center}  
\begin{tabular}{cccccccccccc}
    \toprule
    \multirow{2}{*}{$p$} & \multirow{2}{*}{$\Delta$} & 
     &  \multicolumn{3}{c}{$A=2$}& \multicolumn{3}{c}{$A=4$}& \multicolumn{3}{c}{$A=8$}\\
     & & & $r=0.2$ & $0.5$ & $0.8$ & $r=0.2$ & $0.5$ & $0.8$ & $r=0.2$ & $0.5$ & $0.8$\\
     \midrule
    \multirow{3}{*}{125} &   \multirow{3}{*}{0.05} & EB & 1.05 & 1.16 & 1.07 & 1.00 & 1.02 & 0.90 & 1.14 & 0.94 & 1.09 \\  
     & & HS & 1.11 & 1.25 & 1.16 & 1.09 & 1.11 & 0.99 & 1.24 & 1.02 & 1.14\\
     & & Lasso &  1.41 & 1.62 & 1.46 & 1.44 & 1.59 & 1.55 & 1.77 & 1.71 & 1.75 \\
     \hline
      \multirow{3}{*}{125} &  \multirow{3}{*}{0.1} & EB & 1.06 & 0.86 & 1.13 & 1.03 & 1.27 & 1.16 & 1.16 & 1.16 & 1.23 \\  
     & & HS & 1.17 & 1.03 & 1.32 & 1.26 & 1.56 & 1.37 & 1.35 & 1.36 & 1.44\\
     & & Lasso & 1.69 & 1.54 & 1.72 & 2.00 & 1.97 & 1.98 & 2.04 & 2.16 & 2.38 \\
     \hline
      \multirow{3}{*}{125} &  \multirow{3}{*}{0.2} & EB & 1.43 & 1.20 & 1.27 & 1.37 & 1.47 & 1.39 & 1.58 & 1.54 & 1.33 \\  
     & & HS & 2.53 & 2.04 & 2.04 & 2.38 & 2.08 & 1.98 & 2.28 & 2.15 & 1.96 \\
     & & Lasso & 3.34 & 2.36 & 2.23 & 3.05 & 2.59 & 2.31 & 3.20 & 2.89 & 2.52 \\
     \hline
     \multirow{3}{*}{250} &  \multirow{3}{*}{0.05} & EB & 0.99 & 1.01 & 1.37 & 1.22 & 1.14 & 1.13 & 1.25 & 1.03 & 1.02 \\
     & &  HS & 1.11 & 1.29 & 1.60 & 1.54 & 1.38 & 1.27 & 1.56 & 1.19 & 1.29 \\
     & &  Lasso & 1.34 & 1.51 & 1.63 & 1.91 & 1.39 & 1.51 & 1.84 & 1.52 & 1.46 \\
     \hline
    \multirow{3}{*}{250} &  \multirow{3}{*}{0.1} & EB & 1.28 & 1.16 & 1.11 & 1.32 & 1.47 & 1.50 & 1.33 & 1.48 & 1.27 \\
     & &  HS & 1.90 & 1.86 & 2.24 & 2.17 & 2.37 & 2.36 & 2.04 & 2.50 & 2.11 \\
     & &  Lasso & 3.69 & 1.94 & 1.42 & 3.57 & 2.57 & 2.07 & 3.34 & 2.57 & 1.80 \\
    \hline
     \multirow{3}{*}{500} &  \multirow{3}{*}{0.05} & EB & 1.52 & 1.18 & 1.53 & 1.46 & 1.28 & 1.40 & 1.57 & 1.70 & 1.35 \\
     & &  HS & 2.97 & 1.69 & 3.02 & 2.38 & 1.61 & 2.20 & 6.44 & 2.12 & 2.29 \\
     & &  Lasso & 9.71 & 2.65 & 1.70 & 20.69 & 2.61 & 1.75 & 41.23 & 2.92 & 1.89  \\
    \bottomrule
\end{tabular}
\end{center}
}
\caption{Comparison of MSPEs for the three different methods across various less-sparse settings---of dimension $p \in \{125, 250, 500\}$, density $\Delta$, signal size $A \in \{2,4,8\}$, and correlation $r \in \{0.2, 0.5, 0.8\}$---as described in the text. The standard errors are between 0.08 and 0.35, with the exception of some in the $p=500$ case. }
\label{table:mspe2}
\end{table}

From Table~\ref{table:mspe2}, EB is outperforming the other methods in almost all of these settings in terms of MSPE. Our method consistently provides low MSPE values, typically between 1 and 1.5, with the exception of a few settings when $p=500$.  Excluding the $p=500$ settings, the standard errors lie between 0.08 and 0.35, with most below or around 0.10; the mean is 0.15, and the median is 0.13. Taking these standard errors into consideration, EB and HS perform similarly when the number of signals is relatively low, in particular, at $(p,\Delta)$ pairs $(125, 0.05)$, $(125,0.1)$, and $(250, 0.05)$.  But in the moderate density cases, namely, $(p,\Delta)$ equal $(125,0.2)$ and $(250,0.1)$, the EB predictions are significantly better.  The $p=500$ case is more challenging and we find that results for all three methods are rather unstable.  Plots of the raw prediction error magnitudes (not shown) are long-tailed, especially HS and lasso, which explains the larger MSPEs and corresponding standard errors.  While there is too much instability in the $p=500$ case to make any conclusive claims of significant differences, the EB method appears to be ``less unstable'' compared to the others, which is a positive sign.  And just like in the very-sparse settings discussed above, the run-time for EB is a small fraction of that for HS.




Tables~\ref{table:covg2} and~\ref{table:len2} report the prediction coverage probabilities and mean lengths, respectively, for the 95\% prediction intervals based on EB and HS.  Most of the coverage probabilities are close to the target $95\%$, however, we do see that as $p$ and/or $\Delta$ increase, a few of the coverage probabilities moved slightly further away from $95\%$.  For example, when $p=500$, $r=0.5$, and $A=8$, EB has its lowest probability of $88\%$. A few more of EB's coverage probabilities are lower than HS's, although there are also a few instances where EB's coverage is better than HS's.  Most of these differences are not statistically significant, however. The lengths of the EB prediction intervals are consistently shorter than those of HS, and in all settings where $p\geq250$, the difference between lengths of EB and HS prediction intervals are significant.

\begin{table}[t]
{\small 
\begin{center}  
\begin{tabular}{cccccccccccc}
    \toprule
    \multirow{2}{*}{$p$} & \multirow{2}{*}{$\Delta$} &
&  \multicolumn{3}{c}{$A=2$}& \multicolumn{3}{c}{$A=4$}& \multicolumn{3}{c}{$A=8$}\\
     & & & $r=0.2$ & $0.5$ & $0.8$ & $r=0.2$ & $0.5$ & $0.8$ & $r=0.2$ & $0.5$ & $0.8$\\
     \midrule
    \multirow{2}{*}{125} &   \multirow{2}{*}{0.05} & EB & 0.95 & 0.94 & 0.94 & 0.96 & 0.94 & 0.97 & 0.94 & 0.96 & 0.96 \\  
     & & HS & 0.96 & 0.94 & 0.94 & 0.95 & 0.94 & 0.97 & 0.94 & 0.96 & 0.94 \\
     \hline
      \multirow{2}{*}{125} &  \multirow{2}{*}{0.1} & EB & 0.94 & 0.97 & 0.95 & 0.96 & 0.93 & 0.95 & 0.94 & 0.95 & 0.94 \\  
     & & HS & 0.95 & 0.97 & 0.94 & 0.95 & 0.93 & 0.95 & 0.94 & 0.95 & 0.94 \\
     \hline
      \multirow{2}{*}{125} &  \multirow{2}{*}{0.2} & EB & 0.92 & 0.92 & 0.93 & 0.94 & 0.91 & 0.94 & 0.92 & 0.90 & 0.94 \\  
     & & HS & 0.91 & 0.96 & 0.94 & 0.92 & 0.94 & 0.97 & 0.92 & 0.93 & 0.95 \\
     \hline
     \multirow{2}{*}{250} &  \multirow{2}{*}{0.05} & EB & 0.95 & 0.96 & 0.92 & 0.95 & 0.94 & 0.95 & 0.91 & 0.96 & 0.94  \\
     & &  HS & 0.96 & 0.94 & 0.93 & 0.92 & 0.95 & 0.94 & 0.91 & 0.95 & 0.94 \\
     \hline
    \multirow{2}{*}{250} &  \multirow{2}{*}{0.1} & EB & 0.93 & 0.96 & 0.94 & 0.92 & 0.91 & 0.91 & 0.92 & 0.93 & 0.95 \\
     & &  HS & 0.95 & 0.96 & 0.93 & 0.94 & 0.94 & 0.91 & 0.94 & 0.91 & 0.94\\
    \hline
     \multirow{2}{*}{500} &  \multirow{2}{*}{0.05} & EB & 0.92 & 0.94 & 0.95 & 0.95 & 0.94 & 0.93 & 0.91 & 0.88 & 0.95 \\
     & &  HS & 0.96 & 0.96 & 0.94 & 0.98 & 0.96 & 0.94 & 0.94 & 0.93 & 0.90 \\
     \bottomrule
\end{tabular}
\end{center}
}
\caption{Comparison of coverage probability for the two different 95\% prediction intervals across various less-sparse settings---of dimension $p \in \{125, 250, 500\}$, density $\Delta$, signal size $A \in \{2,4,8\}$, and correlation $r \in \{0.2, 0.5, 0.8\}$---as described in the text. All standard errors are between the values of 0.008 and 0.021, with an average of 0.015. }
\label{table:covg2}
\end{table}

\begin{table}[t]
{\small 
\begin{center}  
\begin{tabular}{cccccccccccc}
    \toprule
    \multirow{2}{*}{$p$}  & \multirow{2}{*}{$\Delta$} & 
&  \multicolumn{3}{c}{$A=2$}& \multicolumn{3}{c}{$A=4$}& \multicolumn{3}{c}{$A=8$}\\
     & &  & $r=0.2$ & $0.5$ & $0.8$ & $r=0.2$ & $0.5$ & $0.8$ & $r=0.2$ & $0.5$ & $0.8$\\
     \midrule
    \multirow{3}{*}{125} &  \multirow{3}{*}{0.05} & EB & 4.15 & 4.12 & 4.17 & 4.12 & 4.09 & 4.16 & 4.13 & 4.16 & 4.17\\  
     & & HS & 4.23 & 4.17 & 4.22 & 4.19 & 4.15 & 4.21 & 4.19 & 4.22 & 4.20\\
     & & Oracle & 4.11 & 4.07 & 4.10 & 4.08 & 4.05 & 4.09 & 4.09 & 4.12 & 4.10\\
     \hline
      \multirow{3}{*}{125} &  \multirow{3}{*}{0.1} & EB & 4.14 & 4.14 & 4.20 & 4.13 & 4.15 & 4.20 & 4.15 & 4.16 & 4.17\\
     & & HS & 4.50 & 4.53 & 4.52 & 4.50 & 4.51 & 4.49 & 4.53 & 4.50 & 4.49\\
     & & Oracle & 4.22 & 4.23 & 4.21 & 4.22 & 4.23 & 4.20 & 4.24 & 4.24 & 4.19\\
     \hline
      \multirow{3}{*}{125} &  \multirow{3}{*}{0.2} & EB & 4.20 & 4.23 & 4.43 & 4.17 & 4.24 & 4.50 & 4.19 & 4.19 & 4.38 \\  
     & & HS & 5.59 & 5.59 & 5.67 & 5.59 & 5.63 & 5.73 & 5.63 & 5.53 & 5.54 \\
     & & Oracle & 4.59 & 4.58 & 4.58 & 4.56 & 4.60 & 4.65 & 4.58 & 4.54 & 4.56 \\
     \hline
     \multirow{3}{*}{250} &  \multirow{3}{*}{0.05} & EB & 4.11 & 4.16 & 4.25 & 4.12 & 4.11 & 4.22 & 4.15 & 4.14 & 4.21 \\
     & &  HS & 4.42 & 4.56 & 4.55 & 4.49 & 4.47 & 4.49 & 4.50 & 4.48 & 4.49\\
     & &  Oracle & 4.19 & 4.24 & 4.25 & 4.21 & 4.18 & 4.22 & 4.24 & 4.22 & 4.21 \\
     \hline
    \multirow{3}{*}{250} &  \multirow{3}{*}{0.1} & EB & 4.20 & 4.23 & 4.41 & 4.20 & 4.22 & 4.42 & 4.17 & 4.23 & 4.40 \\
     & &  HS & 5.65 & 5.63 & 6.04 & 5.58 & 5.59 & 5.55 & 5.55 & 5.48 & 5.61 \\
     & &  Oracle & 4.59 & 4.58 & 4.56 & 4.60 & 4.56 & 4.57 & 4.57 & 4.58 & 4.58\\
    \hline
     \multirow{3}{*}{500} &  \multirow{3}{*}{0.05} & EB & 4.26 & 4.20 & 4.54 & 4.51 & 4.20 & 4.39 & 4.31 & 4.19 & 4.44 \\
     & &  HS & 6.53 & 5.53 & 6.68 & 5.76 & 5.47 & 5.54 & 7.09 & 5.44 & 5.50\\
     & &  Oracle & 4.63 & 4.56 & 4.58 & 4.62 & 4.56 & 4.56 & 4.56 & 4.55 & 4.61 \\
    \bottomrule
\end{tabular}
\end{center}
}
\caption{Comparison of mean length for the three different 95\% prediction intervals across various less-sparse settings---of dimension $p \in \{125, 250, 500\}$, density $\Delta$, signal size $A \in \{2,4,8\}$, and correlation $r \in \{0.2, 0.5, 0.8\}$---as described in the text. All standard errors are between the values of 0.01 and 0.63, with an average of 0.04.}
\label{table:len2}
\end{table}

\subsection{Real data application}
\label{SS:real}

Following the example used in \citet{bhadra.hspred}, we use the same real-world data set to examine how our method performs. This pharmacogenomics data set is publicly available in the NCI-60 database, and can be accessed via the R package {\tt mixOmics} \citep{mixomics}, dataset {\tt multidrug}. The expression level of 12 different human ABC transporter genes are predicted using compound concentration levels. To keep our analysis on par with that in  \citet{bhadra.hspred}, we only predict with the 853 compounds that have no missing values. The data set includes 60 samples, which we randomly split into a training and testing set of 75\% and 25\%, respectively. Thus, in this regression scenario, $n=45$ and $p=853$. Each random training and testing split is performed 20 times, and we calculate the average out-of-sample MSPE for these 20 trials, shown in Table~\ref{table:real}. 

\begin{table}[t]
{\small
\begin{center}
  \begin{tabular}{lllllll}
    \toprule
    Response  & EB  & HS & Lasso & Alasso \\
    \midrule
    A1 &  0.93 & 0.93 & 0.97 & 1.00 \\  
    A2 & 0.93 & 0.94 & 1.09 & 0.99 \\
    A3 &  0.93 & 0.93 & 0.97 & 1.07 \\
    A4 &  0.93 & 0.93 & 0.88 & 1.00 \\
    A5 &  0.93 & 0.96 & 0.92 & 0.94 \\
    A6 &  0.93 & 0.93 & 0.98 & 0.93 \\
    A7 &  0.93 & 0.93 & 1.07 & 0.92 \\
    A8 &  0.93 & 0.94 & 1.06 & 1.01 \\
    A9 &  0.93 & 0.92 & 0.82 & 0.92 \\
    A10 & 0.93 & 0.93 & 0.99 & 0.93 \\
    A12 & 0.93 & 0.95 & 1.04 & 1.04 \\
    B1 & 0.73 & 0.54 & 0.61 & 0.42 \\
    \bottomrule
  \end{tabular}
\end{center}
}
\caption{Mean square prediction error for the four methods averaged over 20 random training/testing splits of the data as described in Section~\ref{SS:real}.  The rows correspond to different response variables being predicted. The standard errors range from 0.00 to 0.12, with an average of 0.03. }
\label{table:real}
\end{table}

For the 12 different transporter genes, our empirical Bayes method obtained marginally better out-of-sample MSPE in three of the genes (A2, A8, and A12) than those from the other methods implemented, while being very comparable with the other methods on the other 9 genes as response variables. For gene B1, all four methods have significantly smaller MSPE values than for the other 11 genes, with EB having the largest.  A closer look at the B1 case reveals that the estimated regression coefficients based on, e.g., lasso, are all very small, even the non-zero values.  So small, in fact, that EB tends to select no variables; consequently, its predictions on the testing set are based simply on the mean response from the training set, which is apparently not so effective.  Therefore, in cases where the signals are quite small, there is perhaps some advantage to using a continuous shrinkage-style prior like the horseshoe compared to a discrete selection-style prior like that proposed here.


\section{Discussion}
\label{S:discuss}

In this paper, we apply a recently proposed empirical Bayes method to the context of prediction in sparse high-dimensional linear regression settings.  The key idea is to let the data inform the prior center so that the tail of the prior distribution have little influence on the posterior concentration properties.  This allows for faster computation---since conjugate Gaussian priors can be used, leading to closed-form marginalization and dimension reduction---without sacrificing on posterior concentration rates.  In the context of prediction, being able to formulate the Bayesian model using conjugate priors means that the predictive distribution can be written (almost) in closed-form; it allows for some analytical integration, yielding a relatively easy to compute posterior predictive distribution for the purpose of constructing prediction intervals, etc.  We also extended the theoretical results presented in \citet{martin.mess.walker.eb} to obtain posterior concentration rates relevant to the prediction problem, for both in- and out-of-sample cases, and established a Bernstein--von Mises theorem the sheds light on the empirical Bayes posterior's potential for valid uncertainty quantification in prediction.  All these desirable features are confirmed by the results in real- and simulated-data examples.  

While the computations here using the basic MCMC on the $S$-space are relatively fast, it is worth asking if speed can be further improved if some margin of approximation is allowed.  One option is to use a {\em variational approximation} \citep[e.g.,][]{blei.etal.vb.2017} to $\Pi^n$.  Such an approach, using a point mass--Gaussian mixture mean-field approximation, was presented recently in \citet{ray.szabo.vb} but for the case where the posterior being approximated is based on independent (and data-free) Laplace priors for $\beta_S$, given $S$, as described in \citet{castillo.schmidt.vaart.reg}.  Given that the empirical prior formulation considered here is itself based on point mass--Gaussian type mixtures, the corresponding posterior is in some sense ``closer'' to this mean-field approximation.  So, in addition to the substantial decrease in computation time, some improved performance is to be expected.  Work on this is underway \citep{vebreg} and the theoretical and empirical results obtained so far confirm these expectations.  

An interesting question is if this empirical Bayes methodology can be extended to handle sparse, high-dimensional generalized linear models, such as logistic regression.  In the Gaussian setting considered here, the notion of prior centering is quite natural and relatively simple to arrange, but the idea itself is not specific to Gaussian models.  Work is underway to carry out this non-trivial extension, and we expect that similarly good theoretical and numerical results, like those obtained here for prediction, can be established in that more general context too.

\section*{Acknowledgments}

The authors thank the Action Editor and two anonymous reviewers for their helpful feedback on an earlier version of this manuscript.  This work is partially supported by the National Science Foundation, grants DMS--1737929 and DMS--1737933.

\appendix

\section{Technical details}

\subsection{Summary of results from \citet{martin.mess.walker.eb}}
\label{S:bernoulli}

For the reader's convenience, here we summarize four results from \citet{martin.mess.walker.eb}, in the context of estimation and variable selection, relevant to our present investigation about prediction.  These are not simply restatements of the results in that paper, however, here we have refined the conditions and also strengthened some of the conclusions.  

The first result concerns the concentration rate properties of the posterior distribution, $\Pi^n$, based on the empirical prior described in Section~\ref{S:review}, in the known-$\sigma$ case, with respect to a metric that focuses on estimation of the mean response.  

\begin{prop}
\label{prop:mean}
Let $\Pi^n$ be the empirical Bayes posterior defined in \eqref{eq:update}, and let $s^\star$ be a sequence such that \eqref{eq:standing} holds.  Then there exists positive constants $G$ and $M$ such that 
\[ \sup_{\beta^\star: |S_{\beta^\star}|=s^\star} \E_{\beta^\star} \Pi^n(\{\beta \in \RR^p: \|X(\beta-\beta^\star)\| > M\eps_n\}) \lesssim e^{-G n \eps_n^2} \to 0, \]
where $\eps_n^2 = n^{-1} s^\star \log (p/s^\star) \to 0$.  
\end{prop}

A small difference between Proposition~\ref{prop:mean} and the statement of Theorem~1 in \citet{martin.mess.walker.eb} is the exponential upper bound.  Their proof actually establishes the exponential bound, but they did not include this detail in their statement.  The exponential bound adds value, however, in applications like Corollary~\ref{thm:predictive.rate} in Section~\ref{SSS:rate} above.  

The above result focuses on a metric that is relevant to estimating the mean response which, of course, is relevant for our prediction context here.  But the metric's dependence on the design matrix $X$ makes ``in-sample'' prediction most natural.  For prediction at generic predictor variable settings, rates can be obtained if we focus on a different metric, namely, $\|\beta-\beta^\star\|$, which is directly related to estimation of the regression coefficients.  

Concentration rate results in this stronger metric require additional assumptions about $X$.  In particular, the sparse singular value defined in Equation~\eqref{eq:kappa} above needs to be positive for some configurations a small factor larger than the true $S^\star$.  As this is only slightly stronger than what is required for a sparse $\beta^\star$ to be identifiable \citep{ariascastro.lounici.2014}, it is not too much to ask in order to achieve accurate estimation.  The following summarizes Theorem~3 in \citet{martin.mess.walker.eb}.  Just like with Proposition~\ref{prop:mean}, here we present the result with an exponential upper bound, which is part of the proof of their original theorem, but not its statement.  

\begin{prop}
\label{prop:beta}
Let $\Pi^n$ be the empirical Bayes posterior defined in \eqref{eq:update} and $s^\star$ a sequence such that \eqref{eq:standing} holds and $\lkappa(C s^\star; X) > 0$ for $C > 2$.  Then there exists positive constants $G$ and $M$ such that 
\[ \sup_{\beta^\star: |S_{\beta^\star}|=s^\star} \E_{\beta^\star} \Pi^n\Bigl(\Bigl\{\beta \in \RR^p: \|\beta-\beta^\star\| > \frac{M \eps_n}{\lkappa(C s^\star; X)} \Bigr\}\Bigr) \lesssim e^{-G n \eps_n^2} \to 0, \]
where $\eps_n^2 = n^{-1} s^\star \log (p/s^\star) \to 0$.  
\end{prop}

Next is a basic but essential result pertaining to the posterior's ability to identify the important variables. The following proposition says that the posterior tends not to overfit, i.e., does not include unnecessary variables.  In other words, while the ambient dimension of the posterior is very high, its effective dimension is not too much larger than that of an oracle who knows the correct configuration.  One more time, the exponential upper bound here does not appear in the statement of Theorem~2 in \citet{martin.mess.walker.eb}, but their proof does establish this.  

\begin{prop}
\label{prop:dim}
Let $\Pi^n$ be the empirical Bayes posterior defined in \eqref{eq:update}, and let $s^\star$ be a sequence such that \eqref{eq:standing} holds.  Then there exists constants $C > 1$ and $G > 0$ such that 
\[ \sup_{\beta^\star: |S_{\beta^\star}|=s^\star} \E_{\beta^\star} \Pi^n(\{\beta \in \RR^p: |S_\beta| > Cs^\star \}) \lesssim e^{-G n \eps_n^2} \to 0, \]
where $\eps_n^2 = n^{-1} s^\star \log (p/s^\star) \to 0$.  
\end{prop}

Finally, the strongest of the results relevant to structure learning is the following posterior variable selection consistency theorem.  This proposition says that the posterior probability assigned to the true configuration approaches 1, which implies that any variable selection procedure derived from the full posterior, e.g., based on marginal variable inclusion probabilities or the maximum {\em a posteriori} configuration, will asymptotically identify exactly the correct set of variables. 

\begin{prop}
\label{prop:selection}
Let $\Pi^n$ be the empirical Bayes posterior defined in \eqref{eq:update} and $s^\star$ a sequence such that \eqref{eq:standing} holds and $\lkappa(C s^\star; X) > 0$ for $C > 2$.  If $\beta^\star$ is such that $|S_{\beta^\star}|=s^\star$ and  
\[ \min_{j \in S^\star} |\beta_j^\star| \geq  \frac{M \sigma}{\lkappa(C s^\star; X)} \Bigl( \frac{2\log p}{n} \Bigr)^{1/2}, \]
for some constant $M > 1$, then  $\E_{\beta^\star}\{\Pi^n(\beta: S_\beta = S_{\beta^\star})\} \to 1$ as $n \to \infty$.  
\end{prop}

The statement here differs slightly from that in \citet{martin.mess.walker.eb}, because there are some minor mistakes in their formulation.  A proof of this version of the theorem can be found in the supplementary material at \url{https://arxiv.org/abs/1406.7718v5}.  

\subsection{Predicting a $d$-dimensional response, $d > 1$}
\label{S:dim}


In Section~\ref{SS:theory} we mentioned that, although we focus there on the case of $d=1$ for the sake of simplicity, suitable versions of the results hold for any fixed $d \geq 1$.  Here we describe how those results can be extended to the more general case.  

Let $\tilde X$ denote a generic $d \times p$ matrix, with $d > 1$, at which predictions are desired.  For coefficient vectors $\beta$ and $\beta^\star$, write $h_{\tilde X}(\beta^\star, \beta)$ for the conditional Hellinger distance between $\nm_d(\tilde X \beta, \sigma^2 I)$ and $\nm_d(\tilde X \beta^\star, \sigma^2 I)$, and like in Section~\ref{SSS:rate}, define 
\[ h(\beta^\star, \beta) = \Bigl\{\int h_{\tilde X}^2(\beta^\star, \beta) \, Q_n^d(d\tilde X) \Bigr\}^{1/2}, \]
where $Q_n^d$ is the distribution of matrices $\tilde X$ obtained by randomly sampling (without replacement) $d$ rows from the original $X$ matrix, i.e., 
\[ Q_n^d = \binom{n}{d}^{-1} \sum_{T \subset \{1,\ldots,n\}: |T|=d} \delta_{X[T,\cdot]}, \]
$\delta$ is the Dirac point mass distribution, and $X[T,\cdot]$ denotes the submatrix of $X$ obtained by keeping only the rows corresponding to indices in $T$.  The claim in Theorem~\ref{thm:hellinger.rate} pertains to a sort of marginal Hellinger distance, of which the quantity $h(\beta^\star, \beta)$ above is a generalization.  The crux of the proof of Theorem~\ref{thm:hellinger.rate} is in bounding this Hellinger distance in terms of a corresponding Kullback--Leibler divergence which has a simple form in this Gaussian setting.  If $k_{\tilde X}$ is the analogous conditional Kullback--Leibler divergence of $\nm_d(\tilde X \beta, \sigma^2 I)$ from $\nm_d(\tilde X \beta^\star, \sigma^2 I)$, then we immediately get 
\[ h_{\tilde X}(\beta^\star, \beta) \leq 2 k_{\tilde X}(\beta^\star, \beta). \]
Since the Kullback--Leibler divergence is additive for independent joint distributions, if $\tilde X$ is a realization from $Q_n^d$, i.e., if $\tilde X = X[T,\cdot]$ for a random chosen subset $T$ of indices of size $d$, then we have 
\[ k_{\tilde X}(\beta^\star, \beta) = \sum_{i \in T} k_{x_i}(\beta^\star, \beta) = \sum_{i \in T} |x_i^\top (\beta-\beta^\star)|^2, \]
where $x_i$ denotes a row of $X$ (treated as a column vector).  A key point is that this depends on $\tilde X = X[T,\cdot]$ only through $T$.  So since there are $\binom{n-1}{d-1}$ such $T$ of size $d$ that contain each row of $X$, averaging over $T$ gives 
\[ \int h_{\tilde X}^2(\beta^\star, \beta) \, Q_n^d(d\tilde X) \leq 2 \binom{n}{d}^{-1} \sum_T \sum_{i \in T} |x_i^\top (\beta-\beta^\star)|^2 = \frac{d}{n} \sum_{i=1}^n |x_i^\top(\beta-\beta^\star)|^2. \]
The right-hand side is $dn^{-1}\|X(\beta-\beta^\star)\|^2$, and since $d$ is fixed, the posterior concentration rate in terms of $h(\beta^\star,\beta)$ is no slower than that in terms of $\|X(\beta-\beta^\star)\|$, and the latter we know; see Appendix~\ref{S:bernoulli}.  So, the conclusion of Theorem~\ref{thm:hellinger.rate} holds for any $d \geq 1$.  

Corollary~\ref{thm:predictive.rate} also holds for any $d \geq 1$, with modifications like those described above when $d > 1$.  Our reason for focusing on the $d=1$ case in the main text is that the notation for and interpretation of the $Q_n^d$-type in-sample prediction is cumbersome.


\bibliographystyle{apalike}
\bibliography{mybib}

\end{document}